\begin{document}
\title{\bf The Minimum Number of Dependent \\ 
Arcs and a Related Parameter of \\ 
Generalized Mycielski Graphs}

\author{Hsin-Hao Lai\\
\normalsize Department of Mathematics\\
\normalsize National Kaohsiung Normal University \\
\normalsize Yanchao, Kaohsiung 824, Taiwan\\
\normalsize {\tt Email:hsinhaolai@nknucc.nknu.edu.tw}\\
\and
Ko-Wei Lih\\
\normalsize Institute of Mathematics\\
\normalsize Academia Sinica\\
\normalsize Nankang, Taipei 115, Taiwan\\
\normalsize {\tt Email:makwlih@sinica.edu.tw}}

\date{\small January 21, 2010}
\maketitle

%%%%%%%%%%%%%%%%%macros%%%%%%%%%%%%%%%%%%%%%%%%%%%%%%%%%%%%
\newcommand{\cqfd}{\hfill \rule{8pt}{9pt}}
\newcommand{\la}{\langle}
\newcommand{\ra}{\rangle}
\newtheorem{define}{Definition}
\newtheorem{proposition}[define]{Proposition}
\newtheorem{theorem}[define]{Theorem}
\newtheorem{lemma}[define]{Lemma}
\newtheorem{remark}[define]{Remark}
\newtheorem{corollary}[define]{Corollary}
\newtheorem{problem}[define]{Problem}
\newtheorem{conjecture}[define]{Conjecture}
%
% Environment PROOF
%
\newenvironment{proof}{
\par
\noindent {\bf Proof.}\rm}%
{\mbox{}\hfill\rule{0.5em}{0.809em}\par}
% 0.809/0.5 is roughly the golden ratio

\baselineskip=16pt
\parindent=0.5cm

%%%%%%%%%%%%%%%%%%%%%%%%%%%%%%%%%%%%%%%%%%%%%%%%%%%%%%%%%%%%%
\begin{abstract}
\noindent
Let $D$ be an acyclic orientation of the graph $G$. An arc of 
$D$ is {\em dependent} if its reversal creates a directed cycle. 
Let $d_{\min}(G)$ denote the minimum number of dependent arcs 
over all acyclic orientations of $G$. Let $G(V_0, E_0)$ be a 
graph with vertex set $V_0 = \{\la 0,0 \ra, \la 0,1 \ra, \ldots , 
\la 0,n-1 \ra\}$ and edge set $E_0$. The {\em generalized Mycielski 
graph} ${\sf M}_m(G)$ of $G$, $m > 0$, has vertex set $V=V_0 \cup 
(\cup_{i=1}^{m} V_i) \cup \{u\}$, where $V_i=\{\la i,j \ra \mid 0 
\leqslant j \leqslant n-1\}$ for $1 \leqslant i \leqslant m$, and 
edge set $E=E_0 \cup (\cup_{i=1}^{m} E_i) \cup \{\la m,j \ra u \mid 
0 \leqslant j \leqslant n-1\}$, where $E_i= \{\la i-1,j \ra \la i,k 
\ra \mid \la 0,j \ra \la 0,k \ra \in E_0\}$ for $1 \leqslant i 
\leqslant m$. We generalize results concerning $d_{\min}({\sf M}_1(G))$ 
in K. L. Collins, K. Tysdal, {\em J. Graph Theory}  46 (2004),  285-296,
to $d_{\min}({\sf M}_m(G))$. The underlying graph of a Hasse diagram 
is called a {\em cover graph}. Let $c(G)$ denote the the minimum number 
of edges to be deleted from a graph $G$ to get a cover graph. Analogue 
results about $c(G)$ are also obtained.

\noindent
{\em Keyword}.\  acyclic orientation, dependent arc, source-reversal, 
cover graph, generalized Mycielski graph

\medskip
\noindent
{\em MSC 2000}:\  05C99
\end{abstract}
%%%%%%%%%%%%%%%%%%%%%%%%%%%%%%%%%%%%%%%%%%%%%%%%%%%%%%%%%%%%%%%%%%%%%%%%%

%%%%%%%%%%%%%%%%%%%%%%%%%%%%%%%%%%%%%%%%%%%%%%%%%%%%%%%%%%%%%%%%%%%%%%%%%
%
\section{Introduction}
%
%%%%%%%%%%%%%%%%%%%%%%%%%%%%%%%%%%%%%%%%%%%%%%%%%%%%%%%%%%%%%%%%%%%%%%%%%

Graphs considered in this paper are finite, without loops, or
multiple edges. We use $|G|$ and $\|G\|$, respectively, to denote 
the cardinalities of vertex set $V$ and edge set $E$ of a graph 
$G(V,E)$. The degree of a vertex $v$ of $G$ is denoted $d_G(v)$.
An orientation $D$ of $G$ is obtained by assigning an arbitrary 
direction, either $x \rightarrow y$ or $y \rightarrow x$, on every 
edge $xy$ of $G$. The original undirected graph is called the 
{\em underlying graph} of any such orientation. {\em Sources} 
(or {\em sinks}) are vertices with no ingoing (or outgoing) arcs.
An orientation $D$ is called {\em acyclic} if there does not exist 
any directed cycle.

Suppose that $D$ is an acyclic orientation of $G$. An arc $u 
\rightarrow v$ of $D$, or its underlying edge, is called {\em 
dependent} (in $D$) if the new orientation $D'=(D-(u \rightarrow 
v)) \cup (v \rightarrow u)$ contains a directed cycle. Note that 
$u \rightarrow v$ is a dependent arc if and only if there exists 
a directed walk of length at least 2 from $u$ to $v$. Let $d(D)$ 
denote the number of dependent arcs in $D$. Let $d_{\min}(G)$ and 
$d_{\max}(G)$, respectively, denote the minimum and maximum values 
of $d(D)$ over all acyclic orientations $D$ of $G$. It is known 
(\cite{fflw}) that $d_{\max}(G)=\|G\|-|G|+ k$ for a graph $G$ having 
$k$ components.

Let $\chi(G)$ denote the {\em chromatic number} of $G$, i.e., the 
least number of colors to color the vertices of $G$ so that adjacent 
vertices receive distinct colors. Let $g(G)$ denote the {\em girth} 
of $G$, i.e., the length of a shortest cycle of $G$ if there is any, 
and $\infty$ if $G$ possesses no cycles. Fisher et al. \cite{fflw} 
showed that $d_{\min}(G)=0$ when $\chi(G) < g(G)$. The Hasse diagram 
of a finite partially ordered set depicts the covering relation of 
elements; its underlying graph is called a {\em cover graph}. Pretzel 
\cite{pret} proved that $d_{\min}(G)=0$ is equivalent to $G$ being a 
cover graph. 

A convenient tool for us is the source-reversal operation first
introduced by Mosesian in the context of finite posets and extensively 
used by Pretzel dealing with cover graphs. We will summarize the main 
properties of this operation in Section 2. In Section 3, we will 
introduce another parameter $c(G)$ which lower bounds $d_{\min}(G)$ 
and show that $c(G)=1$ if and only if $d_{\min}(G)=1$. In Section 4, 
we will characterize the case $d_{\min}({\sf M}_m(G)) \geqslant 1$. 
We give generalizations of results established by Collins and Tysdal 
in Section 5. In the final Section, we derive upper bounds for 
$c({\sf M}_m(G))$.

%%%%%%%%%%%%%%%%%%%%%%%%%%%%%%%%%%%%%%%%%%%%%%%%%%%%%%%%%%%%%%%%%%%%%%%%%
%
\section{Source-reversal}
%
%%%%%%%%%%%%%%%%%%%%%%%%%%%%%%%%%%%%%%%%%%%%%%%%%%%%%%%%%%%%%%%%%%%%%%%%%

Let $u$ be a source of the acyclic orientation $D$. A {\em source-reversal} 
operation applied to $u$ reverses the direction of all outgoing arcs from 
$u$ so that $u$ becomes a sink. The new orientation remains acyclic. Note 
that, if there are no dependent arcs in $D$, neither will there be any after 
a source-reversal.

\begin{theorem}\label{mosesian}
Let $D$ be an acyclic orientation of a connected graph $G$. For any vertex 
$u$ of $G$, there exists an orientation $D'$ of $G$ obtained from $D$ by a 
sequence of source-reversals so that $u$ becomes the unique source of $D'$. 
\end{theorem}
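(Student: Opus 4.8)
The plan is to perform an induction on the number of vertices that are not "below" $u$ in the partial order induced by $D$, successively turning $u$ into a larger and larger source by source-reversals. More precisely, since $D$ is acyclic, it induces a partial order $\leqslant_D$ on $V(G)$ where $x \leqslant_D y$ means there is a directed path from $x$ to $y$. First I would reduce to the following local move: whenever $u$ is a source of some acyclic orientation $D''$ but not the unique source, I want to find a vertex $w \neq u$ that is a source of $D''$ and perform source-reversal at $w$. Doing this repeatedly should eventually kill all sources other than $u$, provided I can argue the process terminates and never destroys the property that $u$ is a source.

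The key observation is this. Suppose $u$ is a source of $D''$ and $w$ is another source. Performing a source-reversal at $w$ does not affect any arc incident to $u$ (since $u \neq w$ and $u$ being a source means $w \to u$ is impossible), so $u$ remains a source afterward. Moreover, a source-reversal at $w$ makes $w$ a sink, so in the new orientation $w$ is no longer a source; I must check no \emph{new} source is created. A vertex $x$ becomes a source only if all its incoming arcs got reversed, which for a source-reversal at $w$ means $x$'s only in-neighbor was $w$; but then in $D''$ we had $w \to x$ and $x$ had in-degree $1$. After the reversal $x \to w$, and $x$ is a source. So new sources can indeed appear, and a naive count of sources need not decrease.

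To handle termination, I would instead use a potential function that genuinely decreases. The natural choice: let $\Phi(D'') = \sum_{v} \mathrm{dist}_{D''}(u,v)$ where $\mathrm{dist}$ is the length of a longest directed path from $u$ to $v$ (or, since $G$ is connected and $u$ is a source, one can use any reachability-based rank). Actually the cleanest route is: since $G$ is connected, pick any source $w \neq u$; because $u$ is a source and the orientation is acyclic and connected, there is an undirected path from $w$ to $u$, and along it some directed structure forces, after finitely many source-reversals applied to sources lying "on the $w$ side", the vertex $w$ to cease being a source while $u$ stays a source. I would formalize this by ordering vertices by a topological-sort rank and showing a source-reversal at the \emph{minimal-rank} source other than $u$ strictly decreases the multiset of ranks in lexicographic order. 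The hard part will be pinning down the right potential so that the termination argument is clean; once termination is established, the fact that $u$ survives as a source throughout and that the final orientation has no other source gives the conclusion. Connectivity of $G$ is exactly what guarantees that a unique source forces every vertex to be reachable from $u$, which is implicitly what "the unique source" buys us, but for the statement itself only the existence of such a $D'$ is required, so connectivity enters only to rule out components with their own forced sources.
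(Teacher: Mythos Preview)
The paper does not prove this theorem; it is quoted from Mosesian \cite{mo} (and Pretzel's later papers) without argument, so there is no in-paper proof to compare yours against.

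On its own, your sketch contains the right idea in its opening sentence and then drifts away from it. The quantity ``number of vertices not below $u$'', i.e.\ $|V\setminus R|$ where $R$ is the set of vertices reachable from $u$ by a directed path, \emph{is} the potential you want; the detour through ``first reduce to $u$ being a source'' is unnecessary, and the later candidates you float (sum of longest-path distances from $u$, a fixed topological rank with lexicographic order on a multiset) are either ill-defined on unreachable vertices or cease to be meaningful after the first reversal destroys the chosen linear extension. The genuine gap is that you never show $|V\setminus R|$ can be strictly decreased by a finite sequence of reversals at vertices other than $u$.

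That gap closes as follows. You already observed that reversing a source $w\neq u$ never removes a vertex from $R$. Note further that every $v\in V\setminus R$ has all its in-arcs inside $V\setminus R$ (an in-arc from $R$ would force $v\in R$), so any source of the restriction of the current orientation to $V\setminus R$ is a source of the full orientation. Take a linear extension $(w_1,\ldots,w_m)$ of that restriction; by connectivity of $G$ some $w_j$ has a neighbour in $R$, and choose $j$ minimal. Reversing $w_1,\ldots,w_{j-1}$ in turn (each is a genuine source of the full orientation and touches no edge meeting $R$) rotates the linear extension so that $w_j$ becomes a source; reversing $w_j$ then flips its edge to $R$ inward and puts $w_j$ into $R$. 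Hence after at most $m$ reversals $|V\setminus R|$ strictly drops. Iterating, $R=V$; then every $v\neq u$ has positive in-degree, and acyclicity forces $u$ to have in-degree zero, so $u$ is the unique source.
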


The above result originally appeared in Mosesian \cite{mo}. It was put to 
good use by Pretzel in a series of papers (for example, \cite{pret}, 
\cite{pret86}, \cite{pret03}, and \cite{py}).

Let $D$ be an acyclic orientation of the graph $G$. For an undirected 
cycle $C$ of $G$, we choose one of the two traversals of $C$ as the 
positive direction. An arc is said to be {\em forward} if its orientation 
under $D$ is along the positive direction of $C$, otherwise it is said to 
be {\em backward}. We use $(C,D)^+$ (or $(C,D)^-$) to denote the set of 
all forward (or backward) arcs of $C$ with respect to $D$. The {\em flow 
difference} of $C$ with respect to $D$, denoted $f_D(C)$, is defined to 
be $|(C,D)^+|-|(C,D)^-|$. The cycle $C$ is called {\em $k$-good} if 
$|f_D(C)|\leqslant |C|-2k$, i.e., $C$ has at least $k$ forward arcs and 
$k$ backward arcs. An orientation $D$ is called {\em $k$-good} if all 
undirected cycles of its underlying graph $G$ are $k$-good. The set of 
acyclic orientations coincides with the set of 1-good orientations. A 
graph $G$ has a 2-good orientation if and only if $d_{\min}(G)=0$.

The {\em flow difference} of an orientation $D$ is the mapping $f$ from
all cycles of $G$ to integers such that $f(C)=f_D(C)$ for every cycle $C$.
Let $D$ and $D'$ be two orientations of the graph $G$. We say that $D$ is 
an {\em inversion} of $D'$, and vice versa, if $D$ and $D'$ possess the 
same flow difference.

The following appeared in Pretzel \cite{pret86}.

\begin{theorem}\label{equivalent flow difference}
If $D$ and $D'$ are two acyclic orientations of the graph $G$, then the 
following statements are equivalent.
\begin{enumerate}
\item   $D'$ is an inversion of $D$.
\item   $D'$ can be obtained from $D$ by
a sequence of source-reversals.
\end{enumerate}
\end{theorem}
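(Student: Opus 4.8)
The plan is to treat the two implications separately; (2)$\Rightarrow$(1) is routine, while (1)$\Rightarrow$(2) will rest on Theorem~\ref{mosesian} together with one structural claim. For (2)$\Rightarrow$(1) I would induct on the length of the reversal sequence, so that everything reduces to checking that a single source-reversal preserves the flow difference. Suppose $D'$ arises from $D$ by reversing a source $u$. A cycle $C$ avoiding $u$ is untouched, so $f_{D'}(C)=f_D(C)$. If $C$ runs through $u$, then in $D$ the two edges of $C$ incident with $u$ are both outgoing at $u$, so for either chosen traversal of $C$ exactly one of them is forward and the other backward; after the reversal both become ingoing at $u$, so again exactly one is forward and one backward, while every other edge of $C$ is unchanged. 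Hence $f_{D'}(C)=f_D(C)$ in all cases, and $D$ and $D'$ have the same flow difference.

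For (1)$\Rightarrow$(2) I would first reduce to connected $G$, since source-reversals and flow differences act within components, and then fix a vertex $u$. By Theorem~\ref{mosesian} some sequence of source-reversals carries $D$ to an acyclic orientation $D_1$ whose unique source is $u$, and another carries $D'$ to $D_1'$ whose unique source is $u$; by the implication just proved, $f_{D_1}=f_D=f_{D'}=f_{D_1'}$. Granting for the moment that $D_1=D_1'$, it follows that $D$ is carried to $D_1=D_1'$ and, on running the second sequence backwards, to $D'$. Running a source-reversal sequence in reverse is legitimate because ``obtainable from one another by source-reversals'' is a symmetric relation: the single reversal of a source $u$ of an orientation $E$, producing $E'$, is undone by reversing, in a topological order of $E'$, every vertex other than $u$ (which, being a sink of $E'$, may be placed last); each edge not at $u$ is then reversed twice and each edge at $u$ exactly once, which recovers $E$.

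The heart of the matter is the following claim, which I would prove by contradiction: \emph{two acyclic orientations $D_1,D_2$ of a connected graph that have the same flow difference and share the unique source $u$ must coincide.} Suppose $D_1\neq D_2$. Equality of flow differences says that the set of edges on which $D_1$ and $D_2$ disagree, taken with the $D_1$-orientation, is orthogonal to the cycle space of $G$, hence lies in the cut space; thus there is a function $q\colon V\to\mathbb{Z}$, non-constant because $D_1\neq D_2$, with $q(y)-q(x)\geqslant 0$ along every $D_1$-arc $x\to y$, equality holding precisely when $D_2$ also orients that edge as $x\to y$. Since $u$ is a source of $D_2$, no edge at $u$ is reversed, so $q$ is constant on $u$ together with all its neighbours; since $u$ is the unique source of $D_1$, every vertex lies on some $D_1$-directed path from $u$, along which $q$ is non-decreasing, so $q$ is minimised at $u$, whence $u\notin W$ where $W:=q^{-1}(\max q)\neq\emptyset$. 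No $D_1$-arc leaves $W$ (its head would have $q$-value at least $\max q$), and every $D_1$-arc that enters $W$ from $V\setminus W$ is reversed in $D_2$ (its tail has strictly smaller $q$-value); hence in $D_2$ every edge between $W$ and $V\setminus W$ points out of $W$. The acyclic orientation induced by $D_2$ on $W$ then has a source, which has no ingoing arc at all in $D_2$ and so is a source of $D_2$ inside $W$, hence different from $u$ --- contradicting the uniqueness of $u$ as a source of $D_2$.

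The step I expect to be the main obstacle is precisely this key claim; the rest is bookkeeping around Theorem~\ref{mosesian}. Its proof depends on two ideas: manufacturing the potential $q$ from the coincidence of flow differences (cycle-space/cut-space duality), and then squeezing out a contradiction from the behaviour of $D_2$ on the top level set $q^{-1}(\max q)$.
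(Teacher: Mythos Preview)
The paper does not give its own proof of this theorem; it merely quotes the result from Pretzel \cite{pret86}. So there is nothing in the paper to compare your argument against, and the right question is simply whether your proof stands on its own. It does.

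Your (2)$\Rightarrow$(1) is clean and correct: a source-reversal at $u$ swaps the forward/backward status of the two edges of any cycle through $u$ and leaves all other edges alone, so the flow difference is preserved; induction on the length of the sequence finishes it.

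For (1)$\Rightarrow$(2), your reduction via Theorem~\ref{mosesian} to the claim ``same flow difference and same unique source implies equal'' is the standard line, and your justification of the symmetry of the source-reversal relation (reverse all vertices except the new sink $u$ in a topological order of $E'$) is correct: at step $k$ every edge from an earlier vertex has already been flipped and every edge to a later vertex has not, so $v_k$ is indeed a source, and the net effect reverses precisely the edges at $u$.

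The key claim is also argued correctly. Taking $D_1$ as the reference orientation, the $\{0,1\}$ indicator of the disagreement set is orthogonal to the cycle space (this is exactly what $f_{D_1}=f_{D_2}$ says), hence is a coboundary $\partial q$ with $q\colon V\to\mathbb{Z}$; along each $D_1$-arc $x\to y$ one gets $q(y)-q(x)\in\{0,1\}$, with value $1$ exactly on the reversed edges. The rest of your contradiction---$q$ is minimised at $u$, the top level set $W$ has no outgoing $D_1$-arcs and all incoming $D_1$-arcs reversed in $D_2$, so a $D_2$-source inside $W$ is a global $D_2$-source distinct from $u$---is airtight. One cosmetic point: you could note that $q$ can be taken integer-valued because, on a connected graph, the integral cycle and cut lattices are complementary direct summands of $\mathbb{Z}^{E}$; but since only the ordering of the values of $q$ is used, working over $\mathbb{Q}$ would already suffice.
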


%%%%%%%%%%%%%%%%%%%%%%%%%%%%%%%%%%%%%%%%%%%%%%%%%%%%%%%%%%%%%%%%%%%%%%%%%
%
\section{The case for $d_{\min} = 1$}
%
%%%%%%%%%%%%%%%%%%%%%%%%%%%%%%%%%%%%%%%%%%%%%%%%%%%%%%%%%%%%%%%%%%%%%%%%%

We denote by $c(G)$ the the minimum number of edges to be deleted from 
$G$ so that the remaining graph is a cover graph, i.e., 
$$
c(G)= \min\{|F| \mid F \subseteq E(G) \mbox{ and $G-F$ is a cover graph}\}.
$$
Bollob\'{a}s et al. \cite{bbn} first introduced and studied this parameter.
Their results were extended in R\"{o}dl and Thoma \cite{rt}. It was also 
one of the four parameters that give lower bounds to $d_{\min}(G)$ 
investigated in Lai and Lih \cite{ll}. It is straightforward to observe the 
following.

\medskip
\noindent
{\em Fact 1.}\ \  $c(G) \leqslant d_{\min}(G)$. 

\medskip
\noindent
{\em Fact 2.}\ \  A sufficient and necessary condition for $c(G)=0$ is 
$d_{\min}(G)$ $=0$.

\begin{theorem}\label{c=min=1}
A sufficient and necessary condition for $c(G)=1$ is $d_{\min}(G)=1$.
\end{theorem}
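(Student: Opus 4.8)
The plan is to prove the two implications separately, using Fact~1 and Fact~2 to dispose of trivialities. Since $c(G)\leqslant d_{\min}(G)$ always holds (Fact~1), if $d_{\min}(G)=1$ then $c(G)\leqslant 1$; and $c(G)=0$ would force $d_{\min}(G)=0$ by Fact~2, contradicting $d_{\min}(G)=1$, so $c(G)=1$. That direction is immediate. The substance is the converse: assuming $c(G)=1$, we must show $d_{\min}(G)=1$. Again by Fact~1 we have $d_{\min}(G)\geqslant c(G)=1$, and by Fact~2 (the contrapositive) $c(G)=1\neq 0$ forces $d_{\min}(G)\neq 0$, so in fact $d_{\min}(G)\geqslant 1$ is automatic; what remains is the upper bound $d_{\min}(G)\leqslant 1$, i.e., the construction of an acyclic orientation of $G$ with at most one dependent arc.

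Here is the construction I would carry out. Since $c(G)=1$, fix an edge $e=xy$ such that $H=G-e$ is a cover graph. By Pretzel's theorem $d_{\min}(H)=0$, so $H$ admits a $2$-good orientation $D_H$; equivalently, every cycle of $H$ has at least two forward and two backward arcs, hence no dependent arcs. Now I want to orient $e$ and adjoin it to $D_H$ to get an acyclic orientation $D$ of $G$ in which $e$ is the \emph{only} possibly-dependent arc. First I must choose the direction of $e$ so that $D$ is acyclic: this is possible because $D_H$ is acyclic, so at least one of $x\rightarrow y$, $y\rightarrow x$ avoids closing a directed cycle (if both closed a cycle, $D_H$ would already contain a directed $x$--$y$ path in each direction, contradicting acyclicity of $D_H$ via a concatenated cycle). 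Say we orient $e$ as $x\rightarrow y$. Then the key claim is: every arc of $H$ remains non-dependent in $D$. Suppose some arc $a\rightarrow b$ of $H$ becomes dependent in $D$; then there is a directed walk of length $\geqslant 2$ from $a$ to $b$ in $D$, and since $a\rightarrow b$ was not dependent in $D_H$, this walk must use the new arc $x\rightarrow y$. That produces a directed $a$--$x$ walk and a directed $y$--$b$ walk in $D_H$; combined with $a\rightarrow b$ this gives a directed closed walk through $e$ — but I can then extract from it an undirected cycle of $G$ through $e$ (or through $ab$) whose flow difference is extremal, and I would argue via the $2$-goodness of $D_H$ on a suitable sub-cycle of $H$ that no such configuration can arise, except possibly making $e$ itself dependent.

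The main obstacle is exactly this last claim — showing that adding one well-chosen arc to a $2$-good orientation of $G-e$ creates no dependent arc other than $e$ itself. The danger is a directed walk $a\rightarrow b\rightsquigarrow x\rightarrow y\rightsquigarrow a$ (a directed cycle through $e$ and through the edge $ab$) of length $\geqslant 3$; deleting $e$ from it leaves a directed path in $D_H$ from $y$ to $a$ to $b$ to $\dots$ to $x$, i.e., $x$ and $y$ lie on a common directed path in $D_H$ with at least two intermediate steps. The resolution I expect is to pick the orientation of $e$ more carefully: among acyclic completions, choose $x\rightarrow y$ so that $D_H$ has \emph{no} directed path of length $\geqslant 2$ from $y$ to $x$ (if both orientations failed this, one checks $D_H$ already has two "long" $x$--$y$ dipaths of opposite direction and deletes redundancy to contradict either acyclicity or $2$-goodness of $D_H$ on the cycle they form). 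With that choice, the only directed walk of length $\geqslant 2$ between the endpoints of any edge is the one from $x$ to $y$ (which makes $e$ dependent and accounts for the single dependent arc), and no arc of $H$ can be dependent. This yields $d(D)\leqslant 1$, hence $d_{\min}(G)\leqslant 1$, completing the proof. \cqfd
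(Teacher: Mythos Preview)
Your easy direction is fine, and you correctly isolate the real task: from $c(G)=1$, construct an acyclic orientation with at most one dependent arc. But the construction you sketch does not work, and the ``danger'' you identify is not the right one.

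First, a confusion: you describe the obstruction as a directed walk $a\rightarrow b\rightsquigarrow x\rightarrow y\rightsquigarrow a$, i.e., a directed cycle through $e$ and $ab$. Since $D$ is acyclic this cannot occur. What actually makes an arc $a\rightarrow b$ of $H$ dependent in $D$ is a directed path $a\rightsquigarrow x\rightarrow y\rightsquigarrow b$ through the new arc; there is no reason this should force a directed $y\rightsquigarrow x$ path in $D_H$. Consequently your proposed fix --- choose $x\rightarrow y$ so that $D_H$ has no directed path of length $\geqslant 2$ from $y$ to $x$ --- is aimed at the wrong target and does not control the number of dependent arcs.

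Here is a concrete failure. Let $G$ be two triangles sharing the edge $bc$: vertices $a,b,c,d$ and edges $ab,bc,ca,bd,cd$. Then $e=bc$ is the only edge whose removal yields a cover graph (the $4$-cycle $a,b,d,c$), so $c(G)=1$. Take the $2$-good orientation $D_H$ with $a\to b$, $d\to b$, $d\to c$, $a\to c$ (sources $a,d$; sinks $b,c$). There is no directed path at all between $b$ and $c$ in $D_H$, so your condition is met for either orientation of $e$. Yet adding $b\to c$ makes both $a\to c$ and $d\to c$ dependent (via $a\to b\to c$ and $d\to b\to c$), and adding $c\to b$ symmetrically makes both $a\to b$ and $d\to b$ dependent. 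Two dependent arcs either way. The point is that for a \emph{fixed} $2$-good $D_H$ it may be impossible to orient $e$ so that $D$ has at most one dependent arc; one must also be willing to change $D_H$.

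The paper handles this with source-reversal. Starting from any $D$ as above, it first checks that $x\rightarrow y$ itself is not dependent when $D$ has at least two dependent arcs (otherwise a second dependent arc $e'$ would combine with a directed $x\rightsquigarrow y$ path to exhibit $e'$ as dependent already in $D'$). Then, using Theorems~\ref{mosesian} and~\ref{equivalent flow difference}, it passes to an inversion $D^\ast$ of $D$ in which $y$ is the unique source; this preserves all flow differences. Now any dependent arc $e^\ast$ of $D^\ast$ lies on a cycle $C^\ast$ with $|(C^\ast,D^\ast)^-|=1$. If $C^\ast$ avoided $e$ it would witness a dependent arc in the $2$-good $D'$; if $C^\ast$ uses $e$ but $e^\ast\neq(y\rightarrow x)$, a flow-difference count against $D$ (where $x\rightarrow y$ is backward on $C^\ast$ but not dependent) gives a contradiction. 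Hence the only dependent arc of $D^\ast$ is $y\rightarrow x$, and $d_{\min}(G)=1$. In the example above, pushing sources in $D$ until $c$ becomes the source yields $D^\ast$ with arcs $c\to a,\ c\to d,\ a\to b,\ d\to b,\ c\to b$, whose unique dependent arc is $c\to b$. Your argument is missing precisely this reorientation step; merely choosing the direction of $e$ over a fixed $D_H$ is not enough.
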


\begin{proof}
It follows from Facts 1 and 2 that $d_{\min}(G)=1$ implies $c(G)=1$.

Now let us assume that $c(G)=1$. Then there exists an edge $e=xy$ such 
that $G' = G - e$ has a 2-good orientation $D'$. We may assume that 
there is no directed path from $y$ to $x$ and extend $D'$ to an acyclic 
orientation $D$ of $G$ by adding the arc $x \rightarrow y$.

Since $G$ has no 2-good orientations, $D$ must have at least one 
dependent arc. If $D$ has only one dependent arc, then we are done.
If $D$ has at least two dependent arcs, then each of them must belong 
to a cycle containing $e$.

We claim that $x \rightarrow y$ can not be dependent in $D$. Suppose on 
the contrary that there exists a directed path $x,v_1,v_2,\ldots ,v_s, 
y,$ $s \geqslant 1$, from $x$ to $y$ in $D$. Since $D$ has at least two 
dependent arcs, there is a dependent arc $e'$ in $D$ distinct from $x 
\rightarrow y$, and there exists a cycle $y, u_1, u_2, \ldots ,u_t, x, y$ 
in $G$ such that $e'$ is the only backward arc in this cycle. Consider 
the closed walk $W=x,v_1,v_2,\ldots$, $v_s, y, u_1, u_2$, $\ldots ,u_t, x$.
Reversing $e'$ converts $W$ into a closed directed walk. Hence, $e'$ is a 
dependent arc in $D'$ which contradicts the 2-goodness of $D'$. Therefore, 
$x \rightarrow y$ is not dependent in $D$.

By Theorems \ref{mosesian} and \ref{equivalent flow difference}, we can 
find an inversion $D^\ast$ of $D$ such that $D^\ast$ and $D$ have the 
same flow difference and $y$ is a source in $D^\ast$.

Let $e^\ast$ be an arbitrary dependent arc in $D^\ast$ and $C^\ast$ be a 
cycle of $G$ such that $(C^\ast,D^\ast)^-=\{e^\ast\}$. Then $C^\ast$ must 
pass through the arc $y \rightarrow x$. Otherwise, $|(C^\ast,D^\ast)^-|=
|(C^\ast,D)^-|=|(C^\ast,D')^-|=1$ implies that $e^\ast$ is a dependent arc 
in $D'$, contradicting the 2-goodness of $D'$.

Suppose that $e^\ast$ is different from the arc $y \rightarrow x$. Hence, 
$y \rightarrow x$ belongs to $(C^\ast,D^\ast)^+$. Then the arc $x 
\rightarrow y$ belongs to $(C^\ast,D)^-$. Since $x \rightarrow y$ is not 
dependent in $D$, we have $|(C^\ast,D)^-| \geqslant 2$. By Theorem 
\ref{equivalent flow difference}, $2=2|(C^\ast,D^\ast)^-|=|C^\ast|-
|(C^\ast,D)^+|+|(C^\ast,D)^-|>2$, a contradiction. We conclude that 
$e^\ast$ must be the arc $y \rightarrow x$ in $D^\ast$. Therefore, 
$d_{\min}(G)=d(D^\ast)=1$.
\end{proof}

\bigskip

An immediate consequence of the above Theorem is the following.

\begin{corollary}
If $d_{\min}(G)=2$, then $c(G)=2$.
\end{corollary}

%%%%%%%%%%%%%%%%%%%%%%%%%%%%%%%%%%%%%%%%%%%%%%%%%%%%%%%%%%%%%%%%%%%%%%%%%
%
\section{Non-cover Mycielski graphs}
%
%%%%%%%%%%%%%%%%%%%%%%%%%%%%%%%%%%%%%%%%%%%%%%%%%%%%%%%%%%%%%%%%%%%%%%%%%

Let $G(V_0, E_0)$ be a graph with vertex set $V_0 = \{\la 0,0 \ra,$ 
$\la 0,1 \ra,$ $\ldots ,$ $\la 0,n-1 \ra\}$ and edge set $E_0$. For 
$m > 0$, the {\em generalized Mycielski} graph ${\sf M}_m(G)$ of $G$ 
has vertex set $V=V_0 \cup (\cup_{i=1}^{m} V_i) \cup \{u\}$, where 
$V_i=\{\la i,j \ra \mid 0 \leqslant j \leqslant n-1\}$ for $1 
\leqslant i \leqslant m$, and edge set $E=E_0 \cup (\cup_{i=1}^{m} 
E_i) \cup \{\la m,j \ra u \mid 0 \leqslant j \leqslant n-1\}$, where 
$E_i= \{\la i-1,j \ra \la i,k \ra \mid \la 0,j \ra \la 0,k \ra \in 
E_0\}$ for $1 \leqslant i \leqslant m$. We note that ${\sf M}_1(G)$ 
is commonly known as the {\em Mycielskian} $M(G)$ of $G$. It is easy 
to see that if $H$ is a subgraph of $G$, then ${\sf M}_m(H)$ is a 
subgraph of ${\sf M}_m(G)$. The following was proved in Lih et al. 
\cite{tower}.

\begin{theorem}\label{iffeven}
Let $n \geqslant 3$. Then ${\sf M}_m(C_n)$ is a cover graph if and 
only if $n$ is even.
\end{theorem}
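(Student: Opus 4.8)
The plan is to prove the two directions of Theorem~\ref{iffeven} separately, using the source-reversal machinery of Section~2 together with the flow-difference criterion that a graph is a cover graph precisely when it admits a 2-good orientation.

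\medskip
\noindent
\textbf{The ``only if'' direction ($n$ odd $\Rightarrow$ ${\sf M}_m(C_n)$ is not a cover graph).} Since ${\sf M}_m(C_n)$ contains ${\sf M}_1(C_n)=M(C_n)$ as a subgraph when we contract the levels $V_1,\dots,V_{m-1}$ appropriately --- or more safely, since any 2-good orientation of ${\sf M}_m(C_n)$ restricts to a 2-good orientation of any subgraph --- it suffices to exhibit an obstruction. When $n$ is odd, $C_n$ is an odd cycle and $\chi({\sf M}_m(C_n))=4$ (generalized Mycielski construction raises the chromatic number by one and $\chi(C_n)=3$ for odd $n$), while all short cycles are even or of length $\le 3$ only through $E_0$; I would instead argue directly with flow differences. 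Fix any acyclic orientation $D$ of ${\sf M}_m(C_n)$. I would track the odd cycle $C=\la 0,0\ra\la 0,1\ra\cdots\la 0,n-1\ra\la 0,0\ra$ in $V_0$ and, for each $j$, the cycle obtained by replacing a single vertex $\la 0,j\ra$ by the ``path up and back'' through $\la 1,j'\ra,\la 1,j''\ra$ (the standard Mycielski 5-cycle, or length-$(n{+}2)$ cycle through one $V_1$ detour), noting that all these cycles have odd length. A parity/counting argument on the flow differences of this family of odd cycles --- all of which must be 2-good --- will lead to a contradiction, exactly the way the analogous fact is proved for $M(C_n)$ in Collins--Tysdal. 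The cleanest route is: if $D$ were 2-good, Theorem~\ref{mosesian} lets us make $u$ the unique source; then the arcs $u\to\la m,j\ra$ are all outgoing from $u$, which forces a specific ``downward'' structure on the levels that is incompatible with 2-goodness on the base odd cycle.

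\medskip
\noindent
\textbf{The ``if'' direction ($n$ even $\Rightarrow$ ${\sf M}_m(C_n)$ is a cover graph).} Here I would construct an explicit 2-good orientation. Label the even cycle $C_n$ with vertices $\la 0,0\ra,\dots,\la 0,n-1\ra$ and 2-color it, say $\la 0,j\ra$ gets color $j\bmod 2$. Extend to a proper coloring of ${\sf M}_m(C_n)$ using the standard scheme: $\la i,j\ra$ receives a color depending on $i$ and $j\bmod 2$ (levels alternate, with a small adjustment so that $u$ gets its own color), giving a proper coloring with few colors. Orient every edge from the lower color class to the higher one; this is automatically acyclic. To check 2-goodness I must verify that every cycle has at least two forward and two backward arcs. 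The relevant cycles are: cycles lying entirely in one ``diagonal copy'' of $C_n$ across the levels (these have even length and alternate colors, hence alternate direction, so they are 2-good); cycles that use the apex $u$ (these pass through two edges $\la m,j\ra u$ and $\la m,k\ra u$, which under the orientation point in opposite senses along the cycle, contributing one forward and one backward arc at $u$, and the rest of the cycle contributes more by the even-cycle structure); and cycles mixing several levels. The key point making all of this work is that $n$ is even: every cycle in ${\sf M}_m(C_n)$ that survives must use an even number of ``base'' edges in total, so it can be 2-colored, so the from-low-to-high orientation makes it alternate sufficiently.

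\medskip
\noindent
\textbf{Main obstacle.} The hard part is the ``if'' direction's cycle analysis: ${\sf M}_m(C_n)$ has a rich cycle space (the levels create many cycles of length between $n$ and roughly $2m+n$), and I must confirm 2-goodness uniformly. I expect the clean way to handle it is \emph{not} to enumerate cycles but to use Theorem~\ref{equivalent flow difference}: it is enough to produce \emph{one} acyclic orientation whose flow difference $f$ satisfies $|f(C)|\le |C|-4$ for every cycle $C$, and to prove this I would show that a generator set of the cycle space (the $n$ ``diagonal'' base-cycles plus $n-1$ independent $u$-cycles) consists of cycles on which the proper-coloring orientation is manifestly 2-good, and then argue that the flow-difference bound propagates to sums of cycles because the coloring-induced orientation has the property that along \emph{every} cycle the color labels go up and down at least twice. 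Verifying that propagation --- i.e.\ that ``colors alternate at least twice on generators'' implies the same on all cycles --- is the delicate bookkeeping step, and it is exactly where evenness of $n$ is used.
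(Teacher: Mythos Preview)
First, note that the paper does not prove Theorem~\ref{iffeven} at all; it is quoted from \cite{tower}. So there is no in-paper argument to compare with.

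On the content of your proposal: the ``if'' direction is over-engineered and, as written, contains a real gap. Two-goodness does \emph{not} propagate along sums in the cycle space --- the flow difference $f_D$ is linear there, but the bound $|f_D(C)|\le |C|-4$ involves $|C|$, which can shrink drastically under symmetric difference --- so verifying a basis is insufficient, and your fallback (``along every cycle the color labels go up and down at least twice'') is just a restatement of the goal. The right argument is the one-liner the paper itself uses in proving Theorem~\ref{lai}: for $n$ even, $C_n$ is bipartite and triangle-free, hence $\chi({\sf M}_m(C_n))\le 3<4\le g({\sf M}_m(C_n))$, and the Fisher~et~al.\ criterion gives $d_{\min}=0$.

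For the ``only if'' direction you list four ideas but carry none through. The subgraph claim is false for $m\ge 2$ (the apex $u$ is adjacent only to $V_m$, not to $V_1$), as you half-recognize. The chromatic claim $\chi({\sf M}_m(C_n))=4$ is itself a nontrivial theorem and in any case does not by itself rule out a $2$-good orientation. The last two ideas --- a parity count on a family of odd cycles, or making $u$ the unique source and propagating --- are the promising ones, but your sentence ``forces a specific downward structure \ldots\ incompatible with $2$-goodness on the base odd cycle'' is an assertion, not an argument: that propagation from $u$ down through $V_m,\ldots,V_0$ and the resulting contradiction on explicit odd cycles \emph{is} the entire proof, and you have not supplied it.
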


This can be generalized as follows.

\begin{theorem}\label{lai}
$d_{\min}({\sf M}_m(G)) \geqslant 1$ if and only if $G$ is not bipartite.
\end{theorem}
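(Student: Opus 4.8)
The plan is to prove the two directions separately, using the source-reversal machinery from Section 2 together with the already-established Theorem~\ref{iffeven} about generalized Mycielski graphs of cycles.

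For the ``if'' direction, suppose $G$ is not bipartite. Then $G$ contains an odd cycle $C_n$ as a subgraph, and since ${\sf M}_m(H)$ is a subgraph of ${\sf M}_m(G)$ whenever $H$ is a subgraph of $G$, we get that ${\sf M}_m(C_n)$ is a subgraph of ${\sf M}_m(G)$. By Theorem~\ref{iffeven}, ${\sf M}_m(C_n)$ is not a cover graph (since $n$ is odd), so by Pretzel's characterization (recalled in the Introduction) $d_{\min}({\sf M}_m(C_n)) \geqslant 1$. Since $d_{\min}$ is monotone under taking subgraphs --- any acyclic orientation of the big graph restricts to one of the subgraph, and a dependent arc of the restriction (witnessed by a directed walk of length $\geqslant 2$) remains dependent in the big graph --- we conclude $d_{\min}({\sf M}_m(G)) \geqslant d_{\min}({\sf M}_m(C_n)) \geqslant 1$.

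For the ``only if'' direction, suppose $G$ is bipartite; I must show $d_{\min}({\sf M}_m(G)) = 0$, i.e. that ${\sf M}_m(G)$ is a cover graph, equivalently that it admits a $2$-good orientation. The natural approach is to exhibit such an orientation explicitly using the bipartition $V_0 = A \cup B$ of $G$. The layered structure of ${\sf M}_m(G)$ suggests orienting arcs ``upward'' by layer index with a parity twist coming from the bipartition: for instance, assign to each vertex $\la i,j\ra$ a height $h(\la i,j\ra)$ built from $i$ and from whether $\la 0,j\ra \in A$ or $B$, give $u$ an appropriate extreme height, and orient every edge from lower height to higher height. One then checks that under this potential-style orientation every cycle of ${\sf M}_m(G)$ has at least two forward and two backward arcs. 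It may be cleaner to argue via girth/chromatic number: if one can show the relevant orientation makes every cycle $2$-good directly, or alternatively find a colouring-type certificate; but the explicit height function is the most transparent route.

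The main obstacle is the ``only if'' direction: verifying $2$-goodness for \emph{all} cycles of ${\sf M}_m(G)$, not just short ones. Cycles in ${\sf M}_m(G)$ can weave between the layers $V_0, V_1, \ldots, V_m$ and possibly pass through the apex $u$, so the case analysis is the real work --- one must confirm that the parity contributed by the bipartition of $G$ prevents any cycle from becoming ``almost directed'' (having only one backward arc). A convenient way to organize this is: first handle cycles avoiding $u$ (these project, with their flow differences controlled, to closed walks in $G$ or in a doubled/layered version of $G$, where bipartiteness forces an even number of ``parity-changing'' steps and hence enough forward/backward arcs), and then handle cycles through $u$ separately, noting that such a cycle enters and leaves the top layer $V_m$ and the two apex-edges it uses are oriented oppositely relative to any fixed traversal, immediately supplying one forward and one backward arc, after which the remaining portion is treated as in the first case. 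Once the height function is pinned down correctly, each of these checks should reduce to a short parity computation.
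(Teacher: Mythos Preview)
Your ``if'' direction is exactly the paper's argument: pass to an odd cycle $C$, invoke Theorem~\ref{iffeven} on ${\sf M}_m(C)$, and use monotonicity of being a non-cover graph under supergraphs.

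For the ``only if'' direction you are working much harder than the paper does. The paper dispatches this in one line: if $G$ is bipartite with at least one edge, then $\chi({\sf M}_m(G)) = 3$, while ${\sf M}_m(G)$ is triangle-free (since $G$ is), so $g({\sf M}_m(G)) \geqslant 4 > 3 = \chi({\sf M}_m(G))$; the Fisher--Fraughnaugh--Langley--West result $\chi < g \Rightarrow d_{\min}=0$, already recalled in the Introduction, then gives $d_{\min}({\sf M}_m(G))=0$ immediately. (The edgeless case is trivial.) You even mention the girth/chromatic-number route but set it aside in favour of an explicit height-function orientation --- the opposite of the paper's choice. Your plan is not wrong in principle, but it is left as a sketch: the height function is never actually specified, and the promised cycle-by-cycle $2$-goodness check (including cycles through $u$) is deferred. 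The $\chi < g$ argument avoids all of that case analysis and uses only facts already on the table.
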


\begin{proof}
If $G$ has no edge, then obviously ${\sf M}_m(G)$ is a cover graph. Let 
$G$ be a bipartite graph with at least one edge. Then $\chi({\sf M}_m(G)) 
= 3 < g({\sf M}_m(G))$. Hence ${\sf M}_m(G)$ is a cover graph. If $G$ is 
not bipartite, then $G$ contains an odd cycle $C$ of length at least 3. 
By Theorem \ref{iffeven}, ${\sf M}_m(C)$ is not a cover graph. Since 
${\sf M}_m(G)$ is a supergraph of ${\sf M}_m(C)$, it is not a cover graph.
\end{proof}

\begin{corollary}
$c({\sf M}_m(G)) \geqslant 1$ if and only if $G$ is not bipartite.
\end{corollary}

We are going to construct examples to show that equality can hold in 
Theorem \ref{lai}.

\begin{theorem}\label{mycielski d_min=1}
Let $G(V_0, E_0)$ be a triangle-free graph that is not bipartite.
Suppose that there exists some vertex $\la 0,v\ra$ of $G$ such that 
$G-\la 0,v\ra$ is a bipartite graph whose two parts are denoted by 
$X$ and $Y$. If $\la 0,v\ra$ has precisely one neighbor in $X$ and 
at least one neighbor in $Y$, then $d_{\min}({\sf M}_m(G))=1$.
\end{theorem}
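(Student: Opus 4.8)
The plan is to use Fact~1 together with Theorem~\ref{lai}: since $G$ is not bipartite, $c({\sf M}_m(G))\geqslant 1$, so by Fact~1 it suffices to exhibit a single edge $e$ of ${\sf M}_m(G)$ whose deletion yields a cover graph; by Theorem~\ref{c=min=1} this forces $c({\sf M}_m(G))=1$ and hence $d_{\min}({\sf M}_m(G))=1$. Equivalently, by the discussion in Section~2, it is enough to delete one edge and produce a $2$-good orientation of the rest. The natural candidate for $e$ is the unique edge $\la 0,v\ra\la 0,x\ra$ joining $\la 0,v\ra$ to its one neighbor in $X$ (or perhaps the lifted copy $\la 0,v\ra\la 1,x\ra$ in $E_1$; one of the two should work and I would check both).

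The key step is to construct an explicit $2$-good orientation $D$ of $H:={\sf M}_m(G)-e$. Write the bipartition of $G-\la 0,v\ra$ as $X\cup Y$, and let $\la 0,x_0\ra$ be the deleted neighbor in $X$. The idea is to first orient $G-\la 0,v\ra$ by directing every edge from its $X$-endpoint to its $Y$-endpoint; then give $\la 0,v\ra$ a consistent ``level'' so that it behaves like a vertex of $X$ with respect to its $Y$-neighbors and like a vertex of $Y$ with respect to $\la 0,x_0\ra$ — the deletion of $e$ is exactly what makes this non-contradictory. Propagate this layering through the copies $V_1,\dots,V_m$: a vertex $\la i,j\ra$ gets level determined by the level of $\la 0,j\ra$ shifted by $i$, so that each edge $\la i-1,j\ra\la i,k\ra$ of $E_i$ goes from the lower level to the higher, and finally orient every arc $\la m,j\ra u$ toward $u$ (or away, whichever matches the parity). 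One then checks that every edge is oriented consistently with a height function, so $D$ is acyclic, and more importantly that every cycle of $H$ has at least two forward and two backward arcs, i.e. $D$ is $2$-good.

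For the $2$-goodness check I would argue by cases on where a cycle $C$ of $H$ lives. Cycles entirely inside $G-\la 0,v\ra$ are even (since that graph is bipartite) and the ``$X\to Y$'' orientation makes them alternating, hence trivially $2$-good. Cycles that use $\la 0,v\ra$ (within the base $G$) must, since $\la 0,v\ra$ has only one $X$-neighbor and $e$ is deleted, enter and leave $\la 0,v\ra$ through two of its $Y$-neighbors, so near $\la 0,v\ra$ the cycle has a forward and a backward arc; combined with the bipartite structure of the rest this yields $2$-goodness. For cycles that climb into higher levels $V_i$ and possibly through $u$, the height function (levels increasing with $i$) guarantees at least one local maximum and one local minimum, each contributing a forward–backward pair, and the triangle-freeness of $G$ is what rules out short cycles that could fail to have two of each. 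Summing up, $D$ is $2$-good, so $H$ is a cover graph and the theorem follows.

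The main obstacle I anticipate is getting the level/height assignment to be simultaneously consistent across the deleted edge, the base odd structure around $\la 0,v\ra$, and all the Mycielski layers — in particular making sure no cycle through $\la 0,v\ra$ and up through some $V_i$ (and perhaps $u$) ends up with only one backward arc. Verifying $2$-goodness for exactly these ``mixed'' cycles, using the hypotheses that $G$ is triangle-free and that $\la 0,v\ra$ has a unique $X$-neighbor, is where the real work lies; the bipartite and acyclic parts are routine.
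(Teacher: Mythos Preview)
Your reduction to showing $c({\sf M}_m(G))=1$ and then invoking Theorem~\ref{c=min=1} is logically fine, but it is a detour rather than a shortcut: exhibiting a $2$-good orientation of ${\sf M}_m(G)-e$ is no easier than exhibiting an acyclic orientation of ${\sf M}_m(G)$ with a single dependent arc, which is what the paper does directly. So the real content in both approaches is the construction of the orientation, and that is exactly where your sketch and the paper diverge.

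The paper's orientation is \emph{not} a layer-shifted height function. It first orients $G$ by $X\to Y$, $\la 0,v'\ra\to\la 0,v\ra$, and $\la 0,v\ra\to(\text{its $Y$-neighbours})$; this base orientation $D_1$ has the property that every directed path of length $\geqslant 2$ has length exactly $2$ and starts at $\la 0,v'\ra$ (triangle-freeness is used here). It then extends to ${\sf M}_m(G)-u$ by the rule $\la i,w_1\ra\to\la i-1,w_2\ra$ and $\la i-1,w_1\ra\to\la i,w_2\ra$ whenever $\la 0,w_1\ra\to\la 0,w_2\ra$ in $D_1$; in other words, the direction of every cross-layer edge is dictated by the corresponding base arc and is \emph{independent of the layer index $i$}. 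Finally $\la m,w\ra\to u$ for all $w$. Because any directed walk in the layers projects to a directed walk in $D_1$, one sees immediately that the only dependent arc is $\la m,v'\ra\to u$ (witnessed by the $4$-cycle $u,\la m,v'\ra,\la m-1,v\ra,\la m,v''\ra,u$). So the edge whose removal yields a cover graph is $\la m,v'\ra u$, not the base edge $\la 0,v\ra\la 0,v'\ra$ you propose; and the ``shift-by-$i$'' height function you suggest fails already on the cross-layer edges $\la i-1,j\ra\la i,k\ra$ with $\la 0,j\ra\in Y$, $\la 0,k\ra\in X$, where both endpoints receive the same height. The obstacle you anticipated is real, and the paper sidesteps it by abandoning the height-function idea in favour of the projection-to-$D_1$ argument.
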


\begin{proof}
By Theorem \ref{lai}, we know $d_{\min}({\sf M}_m(G))\geqslant 1$.
It suffices to construct an acyclic orientation of ${\sf M}_m(G)$
possessing a unique dependent arc.

{\bf Step 1}.\
Define an orientation $D_1$ of $G$ as follows.

(1)\  If $xy$ is an edge in $G-\la 0,v\ra$, $x \in X$ and $y \in Y$,
then let $x\rightarrow y$.

(2)\  If $\la 0,v'\ra$ is the unique neighbor of $\la 0,v\ra$ in $X$,
then let $\la 0,v'\ra\rightarrow \la 0,v\ra$.

(3)\  If $\la 0,v''\ra$ is any neighbor of $\la 0,v\ra$ in $Y$,
then let $\la 0,v\ra\rightarrow \la 0,v''\ra$.

Obviously, each vertex in $X$ is a source, each vertex in $Y$ is a 
sink, and $\la 0,v\ra$ is neither a source nor a sink. It follows 
that $D_1$ is an acyclic orientation. Moreover, if $P$ is a directed 
path of length at least 2 in $D_1$, then $\la 0,v'\ra$ must be the 
initial vertex of $P$ and the length of $P$ is precisely 2. Since $G$ 
is triangle-free, $D_1$ has no dependent arc.

{\bf Step 2}.\
Let $D_2$ be the extension of $D_1$ into ${\sf M}_m(G)-u$ by defining
$\la i,w_1 \ra \rightarrow \la i-1,w_2 \ra$ and $\la i-1,w_1 \ra 
\rightarrow \la i,w_2 \ra$ if $\la 0,w_1\ra \rightarrow \la 0,w_2\ra$ 
in $D_1$ and $1\leqslant i\leqslant m$.

If $\la i_1,v_1 \ra,\la i_2,v_2 \ra,\ldots , \la i_t,v_t \ra, \la i_1,v_1 
\ra$ is a directed cycle in $D_2$, then $\la 0,v_1 \ra$, $\la 0,v_2 \ra$, 
$\ldots , \la 0,v_t \ra, \la 0,v_1 \ra$ is a directed closed walk in $D_1$,
contradicting the acyclicity of $D_1$. Similarly, $D_2$ has no dependent 
arc since $D_1$ has none.

{\bf Step 3}.\
Let $D_3$ be the extension of $D_2$ into ${\sf M}_m(G)$ by defining $\la 
m,w\ra \rightarrow u$ for every $\la 0,w \ra$. 

Since $D_2$ is acyclic and $u$ is a sink in $D_3$, $D_3$ is acyclic. If 
$e$ is a dependent arc in $D_3$, then $e$ must be some $\la m,w\ra 
\rightarrow u$. If $\la 0,w \ra \ne \la 0,v' \ra$, then there is a directed 
path $P'$ from $\la m,w\ra$ to a certain $\la m,w'\ra$ in $D_3$. Since there 
is no edge between $\la m,w\ra$ and $\la m,w'\ra$ in ${\sf M}_m(G)$, $P'$ 
must have length at least 2. Hence, we can find a directed path of length at 
least 2 in $D_1$ and $\la 0,v' \ra$ is not the initial vertex of that path. 
This is a contradiction.

Let us consider the arc $\la m,v'\ra \rightarrow u$. Let $\la 0,v'' \ra$ be 
a neighbor of $\la 0,v \ra$ in $Y$. The cycle $u, \la m,v'\ra, \la m-1,v\ra, 
\la m,v''\ra, u$ shows that $\la m,v'\ra \rightarrow u$ is a unique dependent 
arc in $D_3$.
\end{proof}

\bigskip

A graph $G$ satisfying Theorem \ref{mycielski d_min=1} can be constructed as 
follows. Let $v$ be a fixed vertex. Let $X$ be a set of $p \geqslant 2$ vertices
and $Y$ be a set of $q \geqslant 2$ vertices. Choose a vertex $v'$ in $X$ and 
a nonempty proper subset $Y'$ of $Y$. Add edges $vv'$ and $vv''$ for all $v'' 
\in Y'$. Add a path of length at least 3 from $v'$ to some vertex $z$ in $Y'$ 
which alternately uses vertices in $X$ and $Y$ and uses no vertex in $Y'$ except 
the terminal vertex $z$.

However, the problem of characterizing graphs $G$ that satisfy 
$d_{\min}({\sf M}_m(G))=1$ remains open.

%%%%%%%%%%%%%%%%%%%%%%%%%%%%%%%%%%%%%%%%%%%%%%%%%%%%%%%%%%%%%%%%%%%%%%%%%
%
\section{Generalizing a theorem of Collins and Tysdal}
%
%%%%%%%%%%%%%%%%%%%%%%%%%%%%%%%%%%%%%%%%%%%%%%%%%%%%%%%%%%%%%%%%%%%%%%%%%

The following appeared in Collins and Tysdal \cite{ct}.

\begin{theorem}\label{basic_mycielski}
Let $G$ be a triangle-free graph. Then the following statements hold.
\begin{enumerate}
\item   If $d_{\min}(G)\geqslant 1$, then $d_{\min}(M(G))\geqslant 3$.

\item   If $d_{\min}(G)\geqslant 2$, then $d_{\min}(M(G))\geqslant 4$.

\item   If $d_{\min}(G)\geqslant 3$, then $d_{\min}(M(G))\geqslant 6$.
\end{enumerate}
\end{theorem}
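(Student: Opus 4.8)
My plan is to prove all three parts simultaneously by a single contradiction argument: assume that $M(G)$ has an acyclic orientation $D$ with $d(D) = k$ dependent arcs where $k \leqslant 2$ (for part 1), $k \leqslant 3$ (for part 2), or $k \leqslant 5$ (for part 3), and derive a contradiction with $d_{\min}(G)\geqslant 1, 2, 3$ respectively. The main tool is a projection map $\pi$ that collapses each vertex $\la 1,j\ra$ in the middle layer onto the corresponding vertex $\la 0,j\ra$ of $G$, leaving $G$ as the image after deleting $u$. The key point to isolate is how dependent arcs and directed paths of $M(G)$ behave under this projection: a directed path in $M(G)-u$ that avoids using a vertex $\la 1,j\ra$ and its partner $\la 0,j\ra$ "too cleverly" projects to a directed walk in $G$, and a dependent arc of $G$ forces dependent arcs in $M(G)$.

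First I would set up notation: write the $M(G)$ layers as $V_0$, $V_1$, and the apex $u$; note every edge incident to $u$ goes to $V_1$, and every $V_1$–$V_1$ pair is non-adjacent, so $u$ together with $V_1$ is "almost" an independent set fanning into $G$. The first substantive step is to use Theorem~\ref{mosesian} to normalize $D$ so that $u$ is the unique source of $M(G)$; this is harmless since source-reversals preserve $d(D)$ (a source-reversal at a source cannot create or destroy dependent arcs, as recorded in Section~2 — more precisely, by Theorem~\ref{equivalent flow difference} the flow difference, hence the set of $1$-good versus $2$-good cycles, is preserved). With $u$ the unique source, every $\la 1,j\ra$ has $u\rightarrow \la 1,j\ra$, so every $\la 1,j\ra$ is reachable from $u$ and each arc $u\rightarrow\la 1,j\ra$ is dependent precisely when there is a directed $u$–$\la 1,j\ra$ walk of length $\geqslant 2$, which (since the only neighbors of $u$ are in $V_1$, and $V_1$ is independent) happens precisely when some directed path $u\rightarrow\la 1,j'\ra\rightarrow \la 0,?\ra\rightarrow\cdots\rightarrow\la 1,j\ra$ exists.

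The heart of the argument is the following dichotomy applied to the restriction $D_0$ of $D$ to the copy of $G$ sitting on $V_0$. Since $d_{\min}(G)\geqslant 1$, $D_0$ has a dependent arc, so there is an undirected cycle $C$ in $G$ with exactly one backward arc $a$ under $D_0$. I would lift $C$ to a cycle $\widehat C$ in $M(G)-u$ that uses $V_1$-vertices on alternate steps (this is the standard Mycielski lift), and track the forward/backward count: the lift converts $C$'s single backward arc into a controlled small number of backward arcs in $\widehat C$ — at most two, since replacing an endpoint of $a$ by its $V_1$-copy can flip at most the two arcs of $C$ incident to that endpoint. Combined with the dependent arcs among the $u\rightarrow \la 1,j\ra$ arcs (which exist because $u$ is the unique source and $G$ has an edge reachable from the relevant vertices), I would count: each unit of $d_{\min}(G)$ forces, through disjoint or near-disjoint witnessing cycles in $M(G)$, at least the claimed number of dependent arcs. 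For part 1 the bookkeeping is: one dependent arc of $G$ gives a backward arc that survives into $M(G)$ plus at least two fresh dependent arcs of the form $u\rightarrow\la 1,j\ra$, totaling $\geqslant 3$. Parts 2 and 3 refine this by choosing two (resp. three) dependent arcs of $G$ whose witnessing cycles can be taken edge-disjoint enough that their lifts contribute additively, using triangle-freeness of $G$ to rule out the short cycles that would otherwise let a single dependent arc of $M(G)$ serve two witnessing cycles at once.

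The main obstacle I anticipate is precisely this last additivity claim: controlling overlaps among the lifted witnessing cycles and showing that a single dependent arc of $M(G)$ cannot be "blamed" on two independent deficiencies of $G$ at once. Triangle-freeness is doing real work here — without it a length-$3$ cycle through $u$ and two $V_1$-vertices could absorb multiple obligations — and the sharp constants $3, 4, 6$ (rather than $3, 4, 5$) strongly suggest that when $d_{\min}(G)\geqslant 3$ one genuinely extracts an extra dependent arc beyond naive counting, probably by a parity or flow-difference argument on a cycle of $M(G)$ that passes through $u$ twice' worth of $V_1$-structure. I would handle each case by exhibiting an explicit small family of cycles in $M(G)$, computing flow differences via Theorem~\ref{equivalent flow difference} in the normalized orientation, and arguing that fewer than the claimed number of backward arcs is impossible; the $d_{\min}(G)\geqslant 3$ case will require the most care and is where I expect to spend the bulk of the effort.
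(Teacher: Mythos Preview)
The paper does not prove this theorem directly; it is quoted from Collins and Tysdal, and the paper instead proves generalizations (the theorems following Lemmas~\ref{2edges} and~\ref{3edges}) which specialize to the three parts when $m=1$. Those proofs run on a completely different mechanism from yours. The key device is Lemma~\ref{replace}: for any independent set $S\subseteq V_0$, the subgraph $G-S+S'$ of $M(G)$ is isomorphic to $G$. Using triangle-freeness only to guarantee the needed independent sets exist, Lemmas~\ref{2edges} and~\ref{3edges} show that $M(G)-u$ minus any two edges (respectively, minus any three edges of $E_0$) still contains a copy of $G$. One then takes an acyclic orientation $D$ of $M(G)-u$ achieving $d_{\min}(M(G)-u)$, removes two (or three) of its dependent arcs, finds the surviving copy of $G$, and observes that $D$ restricted there already contributes $d_{\min}(G)$ further dependent arcs. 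No normalization at $u$, no cycle lifting, and no flow-difference accounting is used.

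Your normalization step contains a genuine error. You claim source-reversals preserve $d(D)$, citing Section~2 and Theorem~\ref{equivalent flow difference}. Section~2 only says that \emph{if $d(D)=0$ then it remains $0$ after a source-reversal}; Theorem~\ref{equivalent flow difference} preserves the flow difference, hence for each cycle the \emph{number} of backward arcs, but not \emph{which} arc is backward. One arc can be the unique backward arc of several cycles in $D$ and split into several distinct dependent arcs after a reversal. Concretely, take vertices $v,a,b,c$ with edges $va,vb,vc,ac,bc$ and orient $v\to a$, $v\to b$, $v\to c$, $a\to c$, $b\to c$: the only dependent arc is $v\to c$. Reverse the source $v$: now $a\to v$ and $b\to v$ are both dependent, so $d$ jumps from $1$ to $2$. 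Hence pushing an arbitrary $D$ to one in which $u$ is the unique source does not bound $d(D)$ for the original orientation, and everything downstream in your outline (the role of the arcs $u\to\la 1,j\ra$, the ``two fresh dependent arcs'' count) rests on this.

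Beyond that, your additivity sketch for parts~2 and~3 is, as you acknowledge, the unresolved heart of the matter; in particular nothing in the outline explains the jump from $+2$ to $+3$ when $d_{\min}(G)$ passes from $2$ to $3$. The paper's ``surviving copy of $G$'' argument sidesteps both problems uniformly: it never touches the orientation at $u$, and the increment is read off directly from how many edges Lemma~\ref{2edges} or Lemma~\ref{3edges} lets you delete.
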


Let $S$ be a set of vertices of the graph $G(V_0, E_0)$. We use  $S'$ 
to denote the set of vertices $\{ \la 1, j \ra \mid \la 0, j \ra \in S\}$ 
and $G-S+S'$ to denote the subgraph of $M(G)$ induced by the set of 
vertices $(V_0 \setminus S) \cup S'$ in $M(G)$.

\begin{lemma}\label{replace}
If $S$ is an independent set of $G$, then the subgraph $G-S+S'$ of 
$M(G)$ is isomorphic to $G$.
\end{lemma}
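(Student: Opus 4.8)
The plan is to exhibit an explicit bijection between the vertex set of $G$ and the vertex set of $G-S+S'$ and verify that it preserves adjacency in both directions. The natural candidate is the map $\varphi$ that fixes every vertex $\la 0,j\ra$ with $\la 0,j\ra\notin S$ and sends $\la 0,j\ra\mapsto\la 1,j\ra$ for each $\la 0,j\ra\in S$. This is clearly a bijection from $V_0$ onto $(V_0\setminus S)\cup S'$, so the whole content of the lemma is that $\varphi$ and $\varphi^{-1}$ carry edges to edges.

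First I would dispose of the easy case: an edge $\la 0,j\ra\la 0,k\ra$ of $G$ with neither endpoint in $S$ is untouched by $\varphi$ and is still present in $M(G)$, hence in the induced subgraph $G-S+S'$. Next, since $S$ is independent, no edge of $G$ has both endpoints in $S$; so the only remaining edges of $G$ are those $\la 0,j\ra\la 0,k\ra$ with exactly one endpoint, say $\la 0,j\ra$, in $S$. Then $\varphi$ maps this edge to the pair $\la 1,j\ra\la 0,k\ra$, and by the definition of $E_1$ (with the roles $i-1=0$, $i=1$) the edge $\la 0,k\ra\la 1,j\ra$ lies in $E(M(G))$; since both $\la 1,j\ra\in S'$ and $\la 0,k\ra\in V_0\setminus S$ belong to the induced vertex set, this edge is present in $G-S+S'$. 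This shows $\varphi$ is a graph homomorphism onto a spanning subgraph of $G-S+S'$.

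For the converse I would enumerate the edges of $G-S+S'$ by the layer of their endpoints. An edge with both endpoints in $V_0\setminus S$ is an edge of $G$ between two non-$S$ vertices, and $\varphi^{-1}$ fixes it. An edge with one endpoint $\la 1,j\ra\in S'$ and one endpoint $\la 0,k\ra\in V_0\setminus S$ must, by the description of $E$, come from $E_1$, so $\la 0,j\ra\la 0,k\ra\in E_0$; thus $\varphi^{-1}$ sends it to an edge of $G$. Finally there are no edges of $M(G)$ with both endpoints in $V_1$ (the vertices of $V_1$ form an independent set in $M(G)$), so no edge of $G-S+S'$ has both endpoints in $S'$. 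Hence every edge of $G-S+S'$ pulls back to an edge of $G$, and combined with the previous paragraph, $\varphi$ is an isomorphism.

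The only place any care is needed is the bookkeeping in identifying which edges of $M(G)$ can appear in the induced subgraph $G-S+S'$, i.e. making sure one does not overlook an edge type; the independence of $S$ is used exactly once, to rule out edges of $G$ inside $S$, and the fact that $V_1$ is independent in $M(G)$ is used to rule out edges of $G-S+S'$ inside $S'$. Neither step is hard; the lemma is essentially a direct unwinding of the definitions, and I expect the proof to be a few lines once the bijection is named.
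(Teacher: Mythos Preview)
Your proposal is correct and follows exactly the same approach as the paper: you define the very same bijection (the paper calls it $\sigma$) sending $\la 0,i\ra$ to $\la 1,i\ra$ for $\la 0,i\ra\in S$ and fixing the remaining vertices. The paper simply asserts this map is an isomorphism without further comment, whereas you supply the routine adjacency verification; your argument is sound and nothing is missing.
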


\begin{proof}
The mapping $\sigma : V(G)\rightarrow V(G-S+S')$ defined below is an 
isomorphism. $\sigma(\la 0,i \ra)=\la 1,i \ra$ if $\la 0,i \ra \in S$
and $\sigma(\la 0,i \ra)=\la 0,i \ra$ if $\la 0,i \ra \notin S$.
\end{proof}

\bigskip
Proofs of Lemmas \ref{2edges} and \ref{3edges} are modeled after
ideas used in Collins and Tysdal \cite{ct}.

\begin{lemma}\label{2edges}
Let $G(V_0, E_0)$ be a triangle-free graph with at least two edges.
For any two edges $e_1, e_2$ in $M(G)-u$, $M(G)-u-\{e_1,e_2\}$ contains 
a subgraph isomorphic to $G$.
\end{lemma}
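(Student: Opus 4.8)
The plan is to show that deleting any two edges $e_1, e_2$ from $\mathsf{M}(G)-u$ still leaves enough of the Mycielskian to find a copy of $G$, using Lemma \ref{replace} as the workhorse. The key observation is that $\mathsf{M}(G)-u$ consists of the ``bottom'' copy of $G$ on $V_0$, the independent set $V_1$, and the cross-edges $E_1$ joining $\la 0,j\ra$ to $\la 1,k\ra$ whenever $\la 0,j\ra\la 0,k\ra\in E_0$; in particular every edge of $V_0$ is ``doubled'' by a pair of cross-edges into $V_1$. So the strategy is to pick a small independent set $S$ of $G$ and replace the vertices of $S$ by their primed copies in $V_1$, choosing $S$ so that the subgraph $G-S+S'$ avoids both $e_1$ and $e_2$. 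By Lemma \ref{replace} that subgraph is isomorphic to $G$, which is what we want.

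First I would dispose of the edges that do not lie in the bottom copy. If both $e_1$ and $e_2$ are cross-edges in $E_1$, then $G-\emptyset+\emptyset = G[V_0]$ is already a copy of $G$ disjoint from $\{e_1,e_2\}$. If exactly one of them, say $e_1=\la 0,a\ra\la 0,b\ra$, lies in $E_0$ and $e_2$ is a cross-edge, then I would take $S=\{\la 0,a\ra\}$ (a single vertex, hence independent): the subgraph $G-S+S'$ replaces $\la 0,a\ra$ by $\la 1,a\ra$, so it no longer contains the bottom edge $e_1$, and since $G$ is triangle-free and has at least two edges one can arrange (possibly by instead choosing the other endpoint $\la 0,b\ra$ if the cross-edge $e_2$ happens to be incident to $\la 1,a\ra$) that the copy also avoids $e_2$ — there are at most two ``bad'' choices of which endpoint to lift, but there is always a safe one because the edge $e_1$ gives two distinct candidates and $e_2$ kills at most one.

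The substantive case is when both $e_1$ and $e_2$ are edges of the bottom copy, say $e_1=\la 0,a\ra\la 0,b\ra$ and $e_2=\la 0,c\ra\la 0,d\ra$. Here I want an independent set $S\subseteq V_0$ that meets $\{a,b\}$ and meets $\{c,d\}$, so that lifting $S$ destroys both bottom edges, while also ensuring that the resulting copy of $G$ contains neither of the two cross-edges that would ``shadow'' $e_1,e_2$ in $G-S+S'$. If $e_1$ and $e_2$ are non-adjacent, $S=\{a,c\}$ works provided $ac\notin E_0$; since $G$ is triangle-free, among the four pairs in $\{a,b\}\times\{c,d\}$ at least one is a non-edge (if all four were edges we'd have a $K_4$, or at least a triangle through one of the endpoints), so a suitable two-element independent $S$ exists. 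If $e_1$ and $e_2$ share a vertex, say $b=c$, then $S=\{b\}$ already meets both, and triangle-freeness guarantees $\{a,b,d\}$ is not a clique issue — a single-vertex $S$ is automatically independent. After choosing $S$, I would check that the copy $G-S+S'$ avoids the cross-edges corresponding to $e_1,e_2$: the edge $e_1$ in the copy would be realized as $\la 1,a\ra\la 0,b\ra$ or $\la 0,a\ra\la 1,b\ra$ depending on which endpoint lies in $S$, and one simply notes these are specific edges of $E_1$, distinct from $e_1,e_2$ (which are in $E_0$), so no conflict arises.

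I expect the main obstacle to be the careful bookkeeping in the last case: one must simultaneously satisfy ``$S$ independent'', ``$S$ hits $\{a,b\}$'', ``$S$ hits $\{c,d\}$'', and avoid a handful of cross-edge coincidences, and the triangle-free hypothesis has to be invoked at exactly the right moment to rule out the configuration where every candidate two-element set $S$ induces an edge. Organizing this as a short case analysis on the adjacency pattern of $e_1$ and $e_2$ (disjoint and independent endpoints; disjoint but with a connecting edge; sharing a vertex), together with the reduction that handles cross-edges, should make each case routine, with Lemma \ref{replace} supplying the isomorphism at the end.
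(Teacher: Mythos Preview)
Your plan is correct and follows essentially the same route as the paper: reduce to finding a small independent set $S$ in $G$ so that $G-S+S'$ misses both edges, then invoke Lemma~\ref{replace}. The paper organises the cases slightly more economically---it lifts one endpoint of $e_1$ first and only then asks whether $e_2$ survives in that copy, splitting on whether $e_2$ touches the lifted vertex---but the content is the same, including the use of triangle-freeness to guarantee a non-adjacent pair among the endpoints when both edges lie in $E_0$. Your worry in the final paragraph about ``cross-edge coincidences'' in the case $e_1,e_2\in E_0$ is unnecessary: once $S$ meets both $\{a,b\}$ and $\{c,d\}$, the bottom edges $e_1,e_2$ simply no longer have both endpoints in $G-S+S'$, and since $e_1,e_2\in E_0$ there are no cross-edges to avoid.
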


\begin{proof}
If none of $e_1$ and $e_2$ is an edge in $E_0$, we are done. Hence, 
we assume that $e_1=\la 0,x_1\ra \la 0,y_1\ra \in E_0$ and consider 
the subgraph $G'$ of $M(G)$ induced by $(V_0 \setminus \{\la 0,x_1\ra\})
\cup\{\la 1,x_1\ra\}$. The graph $G'$ is isomorphic to $G$. If $e_2$ is 
not an edge in $G'$, we are done. Assume that $e_2$ is an edge in $G'$.

{\bf Case 1.} The edge $e_2$ is not incident to $\la 1,x_1\ra$. Since 
$G$ is triangle-free, $\la 0,x_1\ra$ can not be adjacent to both endpoints 
of $e_2$. Suppose that $\la 0,x_2\ra$ is an endpoint of $e_2$ and not 
adjacent to $\la 0,x_1\ra$. Let $S=\{\la 0,x_1\ra, \la 0,x_2\ra\}$.

{\bf Case 2.} The vertex $\la 1,x_1\ra$ is an endpoint of $e_2$. Let $S=
\{\la 0,y_1\ra\}$.

In each case, $S$ is an independent set. By Lemma \ref{replace}, $G-S+S'$ 
is a subgraph of $M(G)-u-\{e_1,e_2\}$ that is isomorphic to $G$.
\end{proof}

\begin{theorem}
If a graph $G$ is triangle-free with at least two edges and $d_{\min}(G)
\geqslant 1$, then $d_{\min}({\sf M}_m(G))\geqslant d_{\min}(G)+2$.
\end{theorem}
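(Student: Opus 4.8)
The plan is to argue by contradiction: suppose ${\sf M}_m(G)$ has an acyclic orientation $D$ with at most $d_{\min}(G)+1$ dependent arcs, and derive a contradiction with the minimality of $d_{\min}(G)$. The key idea, following Collins and Tysdal, is to locate inside ${\sf M}_m(G)-u$ a subgraph isomorphic to $G$ on which the restriction of $D$ (after possibly applying source-reversals, which by Theorems~\ref{mosesian} and~\ref{equivalent flow difference} preserve the number of dependent arcs) has strictly fewer than $d_{\min}(G)$ dependent arcs. Since every $\langle m,j\rangle u$ edge is incident to the vertex $u$, and $u$ can be made a sink by a sequence of source-reversals, one should first normalize $D$ so that $u$ is a sink; then no arc incident to $u$ is dependent \emph{unless} it lies on a directed path of length $\geqslant 2$ into $u$, and one must track how many of the $\leqslant d_{\min}(G)+1$ dependent arcs can be forced to sit among the edges between $V_{m-1}$, $V_m$, and $u$.

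The main step is a counting/pigeonhole argument on where the dependent arcs of $D$ can lie. I would partition the edges of ${\sf M}_m(G)$ into the "core" copy-of-$G$-type layers $E_0, E_1, \ldots, E_m$ and the "apex" edges $\{\langle m,j\rangle u\}$. The strategy is: among at most $d_{\min}(G)+1$ dependent arcs, at least two must be "wasted" on the apex structure (or on a single layer $E_i$), so that some copy of $G$ sitting inside ${\sf M}_m(G)-u-\{e_1,e_2\}$ — guaranteed to exist by Lemma~\ref{2edges} when $m=1$, and by the analogous layered construction (peeling off copies $V_i \to V_{i+1}$ one vertex at a time, using Lemma~\ref{replace} iteratively) for general $m$ — inherits an acyclic orientation with at most $d_{\min}(G)-1$ dependent arcs. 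That contradicts the definition of $d_{\min}(G)$, since dependence of an arc in a subgraph implies dependence in the supergraph (a directed path of length $\geqslant 2$ in the subgraph is one in ${\sf M}_m(G)$ too). The normalization by source-reversals is what lets us assume the two "extra" dependent arcs are precisely where we need them, namely incident to $u$ via the cycle $u, \langle m,v'\rangle, \langle m-1,v\rangle, \langle m,v''\rangle, u$ type gadgets used in the proof of Theorem~\ref{mycielski d_min=1}.

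The hardest part, I expect, will be the general-$m$ analogue of Lemma~\ref{2edges}: showing that after deleting any two edges from ${\sf M}_m(G)-u$ one can still find an induced copy of $G$. For $m=1$ this is exactly Lemma~\ref{2edges}, handled by the two-case split (whether $e_2$ meets the lifted vertex $\langle 1,x_1\rangle$). For larger $m$ the lifting is more delicate because $E_i$ and $E_{i+1}$ interact: lifting $\langle 0,x\rangle$ to $\langle 1,x\rangle$ only removes edges of $E_0 \cup E_1$, not of the deeper layers, so one must be careful that the chosen independent set $S$ and its lift land in a single "slice" whose induced subgraph is a clean copy of $G$. I would prove this by first observing that ${\sf M}_m(G) - u - V_2 - \cdots - V_m$ is exactly $M(G)$ on the vertex set $V_0 \cup V_1$, reducing the edge-deletion problem on the two affected layers back to Lemma~\ref{2edges}, and handling the case where $e_1$ or $e_2$ lies in a deeper layer $E_i$ ($i \geqslant 2$) separately — there one deletes no $E_0$-edge at all, so the subgraph $G$ sitting on $V_0$ is already untouched and we are immediately done.

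Once that lemma is in hand, the remaining bookkeeping is routine: reverse sources to make $u$ a sink and to concentrate the unavoidable "apex" dependencies, count that this uses up at least two of the $\leqslant d_{\min}(G)+1$ dependent arcs, delete the two edges carrying those dependencies from ${\sf M}_m(G)-u$, apply the lemma to extract a copy $G^\ast \cong G$ that avoids those two edges, and restrict $D$ to $G^\ast$ to obtain an acyclic orientation with $\leqslant d_{\min}(G)-1$ dependent arcs — the desired contradiction. The hypothesis that $G$ has at least two edges is needed precisely to invoke the lemma, and the triangle-free hypothesis is what guarantees the lifted vertex sets stay independent so that the copies of $G$ are genuinely induced.
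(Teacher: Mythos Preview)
Your proposal has real gaps, and it misses the key simplification that makes the paper's argument short.

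First, the claim that source-reversals ``preserve the number of dependent arcs'' is not what Theorems~\ref{mosesian} and~\ref{equivalent flow difference} say: those theorems concern flow differences, not dependent-arc counts, and in general a source-reversal can change $d(D)$. (The paper only notes that \emph{zero} dependent arcs is preserved.) Since your whole normalization step---making $u$ a sink and then asserting that the same bound $\leqslant d_{\min}(G)+1$ still holds---rests on this, the argument does not go through as written. Relatedly, the assertion that ``at least two dependent arcs must be wasted on the apex structure'' is never justified; nothing prevents all dependent arcs of a minimum orientation of ${\sf M}_m(G)$ from lying in $E_0$.

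Second, you are working much harder than necessary on the $m$-dependence. The paper never needs a general-$m$ analogue of Lemma~\ref{2edges}, nor any analysis of the apex edges: it simply observes that $M(G)-u$ (the subgraph on $V_0\cup V_1$ with edge set $E_0\cup E_1$) is a subgraph of ${\sf M}_m(G)$ for every $m\geqslant 1$, so $d_{\min}({\sf M}_m(G))\geqslant d_{\min}(M(G)-u)$, and then proves $d_{\min}(M(G)-u)\geqslant d_{\min}(G)+2$ directly. This completely sidesteps $u$, the layers $V_2,\ldots,V_m$, and any source-reversal.

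Third, even within $M(G)-u$ your outline skips the bootstrap that actually produces \emph{two} removable dependent arcs. The paper argues in two passes: starting from any acyclic orientation $D$ of $M(G)-u$ with dependent set $F$, one has $|F|\geqslant d_{\min}(G)\geqslant 1$; pick $e_1\in F$ and any second edge $e_2$, apply Lemma~\ref{2edges} to find a copy of $G$ avoiding $\{e_1,e_2\}$, and conclude $|F|\geqslant d_{\min}(G)+1\geqslant 2$; \emph{now} pick two distinct $e_1',e_2'\in F$ and apply Lemma~\ref{2edges} again to get $|F|\geqslant d_{\min}(G)+2$. Your proposal jumps straight to removing two dependent arcs without first establishing that two exist, and tries to manufacture them via the apex/source-reversal detour instead.
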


\begin{proof}
By assumption, $d_{\min}(M(G)-u) \geqslant d_{\min}(G) \geqslant 1$. Let 
$F$ be the set of dependent arcs of an acyclic orientation $D$ of $M(G)-u$ 
that satisfies $d(D)=d_{\min}(M(G)-u)$, hence $|F| \geqslant 1$. Pick an edge
$e_1$ from $F$ and another edge $e_2 \ne e_1$ of $M(G)-u$. By Lemma 
\ref{2edges}, $M(G)-u-\{e_1,e_2\}$ contains a subgraph isomorphic to $G$. 
Thus $d_{\min}(M(G)-u) \geqslant d_{\min}(G)+1\geqslant 2$, and hence we can 
find two distinct edges $e_1'$ and $e_2'$ from $F$. By Lemma \ref{2edges} again, 
$M(G)-u-\{e_1',e_2'\}$ contains a subgraph isomorphic to $G$. It follows that 
$d_{\min}(M(G)-u) \geqslant d_{\min}(G)+2$. Finally, $d_{\min}({\sf M}_m(G))
\geqslant d_{\min}(M(G)-u) \geqslant d_{\min}(G)+2$.
\end{proof}

\bigskip

If we replace the set $F$ in the above proof by a set $F'$ of edges of 
$M(G)-u$ such that $M(G)-u-F'$ is a cover graph and $|F'|=c(M(G)-u)$, 
then we can use the same argument to get the following.
 
\begin{corollary}
If a graph $G$ is triangle-free and $c(G)\geqslant 1$, then $c({\sf M}_m(G))
\geqslant c(G)+2$.
\end{corollary}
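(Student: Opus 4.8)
The plan is to mimic the proof of the immediately preceding theorem, substituting the parameter $c$ for $d_{\min}$ throughout, as the corollary statement itself suggests. First I would note that since $G$ is triangle-free and $c(G) \geqslant 1$, the graph $G$ is not a cover graph, and neither is $M(G)-u$ (it contains $G$ as a subgraph, e.g. via the identity embedding of $V_0$). More to the point, I want to bound $c(M(G)-u)$ from below using $c(G)$. The key tool is Lemma~\ref{2edges}: for \emph{any} two edges $e_1, e_2$ of $M(G)-u$, the graph $M(G)-u-\{e_1,e_2\}$ contains a subgraph isomorphic to $G$, which therefore requires at least $c(G)$ further edge deletions to become a cover graph.

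The main steps would run as follows. Let $F'$ be a minimum edge set with $M(G)-u-F'$ a cover graph, so $|F'| = c(M(G)-u)$. Since $c(M(G)-u) \geqslant c(G) \geqslant 1$ (any subgraph isomorphic to $G$ inside $M(G)-u$, obtained from Lemma~\ref{replace} with $S = \varnothing$, forces this), $F'$ is nonempty; pick $e_1 \in F'$ and any other edge $e_2 \neq e_1$ of $M(G)-u$. By Lemma~\ref{2edges}, $M(G)-u-\{e_1,e_2\}$ contains a copy of $G$; since deleting $F'$ makes $M(G)-u$ a cover graph and cover graphs are closed under taking subgraphs, deleting $F' \setminus \{e_1,e_2\}$ makes that copy of $G$ a cover graph, whence $|F'| - 2 \geqslant c(G)$ would be the naive hope — but that only gives $c(M(G)-u) \geqslant c(G)+2$ directly \emph{if} we already know $|F'| \geqslant 2$ suffices to pick two edges, which we don't yet. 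So instead one bootstraps exactly as in the theorem: first use a single $e_1 \in F'$ plus an arbitrary $e_2$ to conclude $c(M(G)-u) \geqslant c(G) + 1 \geqslant 2$; then, knowing $|F'| \geqslant 2$, pick two distinct edges $e_1', e_2' \in F'$ and apply Lemma~\ref{2edges} once more to the pair $\{e_1', e_2'\}$, obtaining a copy of $G$ in $M(G)-u-\{e_1',e_2'\}$ that $F' \setminus \{e_1',e_2'\}$ must turn into a cover graph, so $|F'| - 2 \geqslant c(G)$, i.e. $c(M(G)-u) \geqslant c(G)+2$. Finally, since $M(G)-u$ is a subgraph of ${\sf M}_m(G)$ (the embedding $\la 0,j\ra \mapsto \la 0,j\ra$, $\la 1,j\ra \mapsto \la 1,j\ra$ is clearly an isomorphism of $M(G)-u$ onto the subgraph of ${\sf M}_m(G)$ induced by $V_0 \cup V_1$), and $c$ is monotone under subgraphs, $c({\sf M}_m(G)) \geqslant c(M(G)-u) \geqslant c(G)+2$.

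The one point that needs a word of care — and the place I'd expect to be the only real obstacle — is the monotonicity facts being used implicitly: that $c(H) \geqslant c(H_0)$ whenever $H_0$ is a subgraph of $H$. This holds because if $F$ is a minimum deletion set for $H$ and $H-F$ is a cover graph, then $H_0 - (F \cap E(H_0))$ is a subgraph of $H-F$, hence a cover graph (a subgraph of a Hasse-diagram underlying graph is again one: restrict the partial order), so $c(H_0) \leqslant |F \cap E(H_0)| \leqslant |F| = c(H)$. This is presumably folklore in this area and may be cited rather than proved, exactly as the parallel facts for $d_{\min}$ are used without comment in the preceding theorem. Everything else is a verbatim transcription of the $d_{\min}$ argument with Fact~2 (or rather the subgraph-monotonicity of $c$) supplying the base case $c(M(G)-u) \geqslant c(G) \geqslant 1$, so the proof is genuinely "the same argument," as the paper asserts.
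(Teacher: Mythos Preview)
Your proposal is correct and follows exactly the route the paper indicates: the paper's entire ``proof'' of this corollary is the sentence that one replaces the set $F$ of dependent arcs in the preceding theorem's argument by a minimum deletion set $F'$ with $M(G)-u-F'$ a cover graph, and then runs the identical two-step bootstrap via Lemma~\ref{2edges}. Your write-up spells this out faithfully, including the subgraph-monotonicity of $c$ that the paper leaves implicit.
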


\begin{lemma}\label{3edges}
Let $G(V_0, E_0)$ be a triangle-free graph with $\|G\|\geqslant 3$. For any 
three edges $e_1, e_2, e_3$ in $E_0$, $M(G)-u-\{e_1,e_2,e_3\}$ contains a 
subgraph isomorphic to $G$.
\end{lemma}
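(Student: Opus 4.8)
The plan is to mimic the structure of the proof of Lemma~\ref{2edges}, but now we must absorb up to three edges, all of which lie in $E_0$. The basic move is the same: if $e=\la 0,x\ra\la 0,y\ra\in E_0$ is an offending edge, we lift one of its endpoints to level~$1$, passing from $G$ (sitting on $V_0$) to the isomorphic copy $G'$ induced by $(V_0\setminus\{\la 0,x\ra\})\cup\{\la 1,x\ra\}$; this is Lemma~\ref{replace} applied to the singleton $S=\{\la 0,x\ra\}$. Since $\la 1,x\ra$ has no neighbors at level~$0$ except the level-$0$ neighbors of $\la 0,x\ra$ via the $E_1$-edges, lifting one endpoint of $e_1$ destroys $e_1$; the question is what happens to $e_2$ and $e_3$.

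First I would handle $e_1=\la 0,x_1\ra\la 0,y_1\ra$ by deciding which endpoint to lift. Once we lift (say) $\la 0,x_1\ra$, an edge $e_j$ ($j=2,3$) survives into $G'$ only if it is an edge of $G'$, i.e., only if $e_j$ avoids $\la 0,x_1\ra$ entirely (it could, however, now be incident to $\la 1,x_1\ra$ through an $E_1$-edge). So after the first lift at most two of $e_2,e_3$ are still present, and each surviving $e_j$ is either an edge wholly inside $V_0\setminus\{\la 0,x_1\ra\}$ or an edge of the form $\la 1,x_1\ra\la 0,z\ra$ with $\la 0,x_1\ra\la 0,z\ra\in E_0$. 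The heart of the argument is then a careful choice, as in Cases~1 and~2 of Lemma~\ref{2edges}, of a further independent set $S\subseteq V_0$ (now of size up to~$3$, one vertex chosen to kill each surviving edge) so that $G-S+S'$ avoids all of $e_1,e_2,e_3$; independence of $S$ is exactly what makes $G-S+S'\cong G$, by Lemma~\ref{replace} (which as stated is for $M(G)$, but the same map works here since the relevant edges only involve levels $0$ and $1$). The triangle-freeness of $G$ is used repeatedly, precisely as before, to guarantee that for an edge we wish to destroy we can pick one of its endpoints that is \emph{not} adjacent to the vertices already placed in $S$, keeping $S$ independent.

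The main obstacle — and the reason this lemma is harder than Lemma~\ref{2edges} — is the combinatorial case analysis forced by the interaction of the three edges: two surviving edges may share a vertex, or one may be an $E_1$-edge at $\la 1,x_1\ra$ whose level-$0$ endpoint is forced to be adjacent to $\la 0,x_1\ra$, constraining which vertices may still be put in $S$. I would organize the proof by first reducing to the case where all of $e_1,e_2,e_3$ are genuinely present after the initial lift, then splitting according to (i) whether any $e_j$ is incident to the lifted vertex $\la 1,x_1\ra$, and (ii) the incidence pattern among the remaining level-$0$ edges (disjoint, sharing one vertex, forming a path of length~$2$); in each subcase one exhibits an explicit independent $S$ of size~$2$ or~$3$, invoking triangle-freeness to dodge unwanted adjacencies. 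The bookkeeping is routine once the cases are laid out, so I would present it compactly as a short list of cases, each resolved by naming $S$, exactly in the style the authors used for Lemma~\ref{2edges}.
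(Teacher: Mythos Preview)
Your plan is correct and rests on the same idea as the paper: find an independent set $S\subseteq V_0$ meeting each of $e_1,e_2,e_3$ and invoke Lemma~\ref{replace} so that $G-S+S'\cong G$ sits inside $M(G)-u-\{e_1,e_2,e_3\}$. The paper, however, skips your ``lift one endpoint of $e_1$ first, then see what survives'' detour and simply classifies the subgraph of $G$ spanned by $\{e_1,e_2,e_3\}$ directly: it is a star $K_{1,3}$, a path of length~$3$, a path of length~$2$ together with a disjoint edge, or three pairwise disjoint edges (a triangle being excluded by hypothesis); in each of these four cases an explicit independent $S$ of size $1$, $2$, or $3$ is written down, with triangle-freeness used only to certify that $S$ is independent. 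Note also that your case~(i) is vacuous: since $e_1,e_2,e_3\in E_0$ are level-$0$ edges, none of them can ever be incident to the lifted vertex $\la 1,x_1\ra$, so a ``surviving'' $e_j$ is always wholly inside $V_0\setminus\{\la 0,x_1\ra\}$. Once this is observed, your iterated scheme collapses to the paper's one-shot choice of $S$.
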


\begin{proof}
Let  $G'$ be the subgraph of $G$ induced by $\{e_1,e_2,e_3\}$.

{\bf Case 1.}\  If $G'$ is a star, then let $x$ be the vertex of degree 3 
and let $S=\{x\}$.

{\bf Case 2.}\  If $G'$ is a path $v_0v_1v_2v_3$ of length 3, then let
$S=\{v_0,v_2\}$. Since $G$ is triangle-free, $v_0$ and $v_2$ are not 
adjacent.

{\bf Case 3.}\  If $G'$ consists of the disjoint union of a path $P_3$
of length 2 and an edge $P_2$, then one endpoint $y$ of $P_2$ is not 
adjacent to the center vertex $x$ of $P_3$ because $G$ is triangle-free. 
Let $S=\{x,y\}$.

{\bf Case 4.}\  Let the three edges $e_1=x_1y_1$, $e_2=x_2y_2$, and $e_3
=x_3y_3$ be mutually non-incident. Since $G$ is triangle-free, at least 
one endpoint of $e_2$, say $x_2$, is not adjacent to $x_1$. Similarly,
at least one endpoint of $e_3$, say $x_3$, is not adjacent to $x_1$.

If the vertices $x_2$ and $x_3$ are not adjacent, then let $S=\{x_1,$ 
$x_2,$ $x_3\}$.

If $x_2$ and $x_3$ are adjacent and $x_1$ is not adjacent to $y_i$, 
$i = 2$ or $3$, then $y_i$ and $x_{5-i}$ are not adjacent. Let $S=\{
x_1,y_i,x_{5-i}\}$.

If $x_2$ and $x_3$ are adjacent and both $y_2$ and $y_3$ are adjacent 
to $x_1$, then $\{y_1, y_2, y_3\}$ is an independent set. Let $S=\{y_1,
y_2,y_3\}$.

In all cases, $S$ so defined is an independent set. By Lemma \ref{replace},
$G-S+S'$ is a subgraph of $M(G)-u-\{e_1,e_2,e_3\}$ isomorphic to $G$.
\end{proof}

\begin{theorem}
If a graph $G$ is triangle-free with at least three edges and $d_{\min}(G)
\geqslant 3$, then $d_{\min}({\sf M}_m(G))\geqslant d_{\min}(G)+3$.
\end{theorem}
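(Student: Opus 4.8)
The plan is to mimic the structure of the proof of the immediately preceding theorem (the $d_{\min}(G)\geqslant 1$ case), but now bootstrapping three times instead of twice, using Lemma~\ref{3edges} in place of Lemma~\ref{2edges}. First I would set $D$ to be an acyclic orientation of $M(G)-u$ achieving $d(D)=d_{\min}(M(G)-u)$ and let $F$ be its set of dependent arcs; since $M(G)-u\supseteq G$ (take $S=\varnothing$ in Lemma~\ref{replace}, or note $G$ is literally an induced subgraph on $V_0$), we have $|F|=d_{\min}(M(G)-u)\geqslant d_{\min}(G)\geqslant 3$.

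The main engine is the following observation: if $F$ contains three edges $e_1,e_2,e_3$ that all lie in $E_0$, then Lemma~\ref{3edges} produces a copy of $G$ inside $M(G)-u-\{e_1,e_2,e_3\}$, which is a copy of $G$ avoiding all three dependent arcs, forcing $d_{\min}(M(G)-u)\geqslant d_{\min}(G)+3$; combined with $d_{\min}({\sf M}_m(G))\geqslant d_{\min}(M(G)-u)$ (every acyclic orientation of ${\sf M}_m(G)$ restricts to one of $M(G)-u$ — indeed $M(G)-u$ is isomorphic to the subgraph of ${\sf M}_m(G)$ on $V_0\cup V_1$, and deleting further vertices/edges only helps), we are done. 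So the real task is to manufacture three dependent arcs lying in $E_0$. I would do this by the same ratchet as before: starting from $|F|\geqslant 3$, pick any three edges of $F$; if all three are in $E_0$ we are finished, and if not I want to replace offending edges by $E_0$-edges that are still dependent. The cleaner route, matching the two-edge proof, is to argue by successive improvement of the \emph{lower bound} $d_{\min}(M(G)-u)$: knowing $|F|\geqslant 3$, apply Lemma~\ref{2edges} (or a one-edge version) to peel off edges not in $E_0$ one at a time and still find a copy of $G$, raising the bound to $d_{\min}(G)+1$, then to $d_{\min}(G)+2$ exactly as in the previous theorem, giving $|F|\geqslant d_{\min}(G)+2\geqslant 5$; then among these $\geqslant 5$ dependent edges one can guarantee three that lie in $E_0$ — because the edges of $M(G)-u$ incident to $V_1$ do not form too large a set avoiding $E_0$ in the relevant copies — and a final application of Lemma~\ref{3edges} lifts the bound to $d_{\min}(G)+3$.

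The hard part, and the step I would scrutinize most carefully, is exactly this last combinatorial guarantee: that once $F$ is large enough we may extract three members of $F$ that simultaneously lie in $E_0$ (so that Lemma~\ref{3edges}, which requires all of $e_1,e_2,e_3\in E_0$, applies). Lemma~\ref{2edges} is more forgiving — it only needs to dodge two arbitrary edges — so the two-step bootstrap to $d_{\min}(G)+2$ is routine. But for the third step one must either (i) show that dependent arcs can, without loss of generality, be taken to avoid the "$V_0$–$V_1$'' cross edges, by invoking Theorems~\ref{mosesian} and \ref{equivalent flow difference} to move the orientation (e.g.\ making $u$ or a well-chosen vertex of $V_1$ a source) until the dependent arcs concentrate on $E_0$, or (ii) do a short case analysis on how few $E_0$-edges a set of $d_{\min}(G)+2$ dependent arcs can contain, using that each copy of $G$ produced along the way has at least $d_{\min}(G)\geqslant 3$ dependent arcs of its own inside $E_0$-type positions. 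I expect option (i), in the spirit of the source-reversal arguments already used for Theorem~\ref{c=min=1}, to be the intended and smoothest path, and I would organize the write-up around first normalizing the orientation and only then invoking Lemma~\ref{3edges}.
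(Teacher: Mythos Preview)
Your framework is right --- work in $M(G)-u$, take an optimal acyclic orientation $D$ with dependent-arc set $F$, and aim to apply Lemma~\ref{3edges} to three members of $F\cap E_0$ --- but the step you label ``the hard part'' is in fact immediate, and the elaborate bootstrap via Lemma~\ref{2edges} and source-reversals that you propose is unnecessary.

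The missing observation is this: the canonical copy of $G$ on $V_0$ is already a subgraph of $M(G)-u$, and the restriction $D|_{V_0}$ is an acyclic orientation of $G$, hence has at least $d_{\min}(G)\geqslant 3$ dependent arcs. Every such arc lies in $E_0$ by construction, and each one is also dependent in the full orientation $D$ (a directed path of length $\geqslant 2$ inside $G$ is a fortiori such a path in $M(G)-u$). Thus $|F\cap E_0|\geqslant 3$ for free. Pick any three of these, apply Lemma~\ref{3edges} once to obtain a copy $G'\cong G$ inside $M(G)-u-\{e_1,e_2,e_3\}$; the restriction $D|_{G'}$ contributes at least $d_{\min}(G)$ further dependent arcs of $D$, all distinct from $e_1,e_2,e_3$, giving $d(D)\geqslant d_{\min}(G)+3$. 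This is exactly the paper's argument.

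Your option~(ii) grazes this idea (``each copy of $G$ \ldots\ has at least $d_{\min}(G)\geqslant 3$ dependent arcs of its own inside $E_0$-type positions''), but you never apply it to the original copy on $V_0$, which is the whole point. Your option~(i), normalizing via source-reversals, is a detour: there is no need to move dependent arcs around when three of them are already sitting in $E_0$.
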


\begin{proof}
Let $F$ be the set of dependent arcs of an acyclic orientation $D$ of $G$ 
that satisfies $d(D)=d_{\min}(G)$, hence $|F| \geqslant 3$. Pick three 
edges $e_1, e_2, e_3$ from $F$. By Lemma \ref{3edges}, $M(G)-u-\{e_1,e_2,
e_3\}$ contains a subgraph isomorphic to $G$. It follows that 
$d_{\min}({\sf M}_m(G))\geqslant d_{\min}(M(G)-u) \geqslant d_{\min}(G)+3$.
\end{proof}

\begin{corollary}
If a graph $G$ is triangle-free with at least three edges and $c(G)\geqslant 
3$, then $c({\sf M}_m(G))\geqslant c(G)+3$.
\end{corollary}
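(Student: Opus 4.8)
The plan is to run the edge-deletion analogue of the preceding theorem's proof, using exactly the substitution indicated before the first $c$-corollary: replace the dependent-arc set of an optimal acyclic orientation by a minimum set of edges whose removal produces a cover graph, and replace each use of ``$d_{\min}(H)\le$ (number of dependent arcs lying in $H$)'' by ``$c(H)\le$ (number of deleted edges lying in $H$)''. The latter is legitimate because every subgraph of a cover graph is again a cover graph: a $2$-good orientation restricts to a $2$-good orientation on any subgraph, since the cycles of a subgraph are cycles of the whole graph. In particular $c$ is monotone under taking subgraphs.

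First I would record the subgraph reductions. The subgraph of ${\sf M}_m(G)$ induced on $V_0\cup V_1$ is exactly $M(G)-u$, and the subgraph of $M(G)-u$ induced on $V_0$ is a copy of $G$ (this is Lemma~\ref{replace} with $S=\emptyset$). Hence $c({\sf M}_m(G))\ge c(M(G)-u)\ge c(G)\ge 3$, and it suffices to prove $c(M(G)-u)\ge c(G)+3$.

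Next, let $F$ be a minimum set of edges of $M(G)-u$ such that $(M(G)-u)-F$ is a cover graph, so $|F|=c(M(G)-u)$. The key point is that $F$ must contain at least three edges of $E_0$: indeed $G-(F\cap E_0)$ is a subgraph of the cover graph $(M(G)-u)-F$, hence is itself a cover graph, so by definition $|F\cap E_0|\ge c(G)\ge 3$. Pick three edges $e_1,e_2,e_3\in F\cap E_0$. By Lemma~\ref{3edges}, $M(G)-u-\{e_1,e_2,e_3\}$ contains a subgraph $H$ isomorphic to $G$. Since $e_1,e_2,e_3\notin E(H)$ while $e_1,e_2,e_3\in F$, we get $|F\cap E(H)|\le |F|-3$. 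As $H-(F\cap E(H))$ is a subgraph of the cover graph $(M(G)-u)-F$, it is a cover graph, whence $c(G)=c(H)\le |F\cap E(H)|\le |F|-3=c(M(G)-u)-3$. Combining with the previous paragraph gives $c({\sf M}_m(G))\ge c(M(G)-u)\ge c(G)+3$.

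The only place needing genuine care is the claim that $F$ meets $E_0$ in at least three edges, because Lemma~\ref{3edges} applies only to triples of edges drawn from $E_0$, whereas a priori $F$ may contain edges of $E_1$ as well; but this is forced by $c(G)\ge 3$ together with the subgraph-monotonicity of $c$, as above. Everything else is the routine translation of the theorem's argument into the language of cover-graph deletions, parallel to how the two earlier $c$-corollaries were obtained from their companion theorems.
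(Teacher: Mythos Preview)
Your proof is correct and follows exactly the substitution the paper indicates before Corollary~13: take $F$ to be a minimum edge set whose removal from $M(G)-u$ yields a cover graph, and use subgraph-monotonicity of $c$ in place of the dependent-arc count. You are in fact more careful than the paper's own (elliptical) argument for the companion theorem, since you explicitly verify that $|F\cap E_0|\ge c(G)\ge 3$ so that Lemma~\ref{3edges} (which requires the three edges to lie in $E_0$) is applicable.
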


%%%%%%%%%%%%%%%%%%%%%%%%%%%%%%%%%%%%%%%%%%%%%%%%%%%%%%%%%%%%%%
%
\section{Upper bounds of $c({\sf M}_m(G))$}
%
%%%%%%%%%%%%%%%%%%%%%%%%%%%%%%%%%%%%%%%%%%%%%%%%%%%%%%%%%%%%%%

In this section, we derive upper bounds for $c({\sf M}_m(G))$. Since 
$\chi(G)<g(G)$ implies that $G$ is a cover graph, we have the following 
inequality.
$$c(G)\leqslant \min \{\|G\|-\|H\| \mid \mbox{$H$ is a subgraph of $G$ 
and } \chi(H)<g(H)\}.$$

Let $e_k(G)$ be the maximum number of edges in a $k$-colorable subgraph 
of $G$. Since the girth of a subgraph is never smaller than that of the 
given graph, the above inequality implies the following.
\begin{equation}\label{girth and e_k}
c(G) \leqslant \|G\| - e_{k-1}(G) \mbox{ if } g(G) \geqslant k.
\end{equation}

Let $G$ be a triangle-free graph. If $H=(X,Y)$ is a bipartite subgraph of 
$G$, then the following inequality holds by the above inequality.
\begin{equation}\label{girth>=4 pi_E}
c(G) \leqslant \|G\|- \|H\| -e_2(G[X]).
\end{equation}

\medskip

\begin{proof}
Let $X'=(X_1,X_2)$ be a bipartite subgraph of $G[X]$ with $e_2(G[X])$ 
edges. Consider the subgraph $G'=(V(H),E(H)\cup E(X'))$ of $G$.
Obviously, $G'$ is 3-colorable. Hence, $e_3(G)\geq \|H\|+\|X'\|=\|H\|+
e_2(G[X])$. By inequality (\ref{girth and e_k}), we are done.
\end{proof}

\begin{theorem}
If $G$ is a graph and $m$ is a positive integer, then $c({\sf M}_m(G)) 
\leqslant \|G\|$. Moreover, if $G$ is a triangle-free graph, then 
$c({\sf M}_m(G)) \leqslant \|G\|-e_2(G)$.
\end{theorem}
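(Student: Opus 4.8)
The plan is to construct an explicit acyclic orientation of ${\sf M}_m(G)$ in which the only dependent arcs lie among the $\|G\|$ copies of the bottom-level edges $E_0$, together with a counting argument showing these edges can be deleted to leave a cover graph; for the triangle-free refinement, I would show that a further $e_2(G)$ of those edges are not actually dependent, reducing the bound to $\|G\|-e_2(G)$. Since deleting a set $F$ of edges leaves a cover graph whenever the resulting graph admits a $2$-good orientation, it suffices in each case to exhibit $F$ with $|F|$ equal to the claimed bound and a $2$-good orientation of ${\sf M}_m(G)-F$.

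First I would handle the general bound. Take the set $F = \{\la 0,j\ra\la 0,k\ra \mid \la 0,j\ra\la 0,k\ra \in E_0\}$, i.e. all of $E_0$ itself, so $|F| = \|G\|$. In ${\sf M}_m(G) - E_0$ the bottom layer $V_0$ now has no internal edges, and I would orient the remaining graph ``layer by layer'': orient every arc in $E_i$ from $V_{i-1}$ toward $V_i$ (for $1 \le i \le m$), and orient every edge $\la m,j\ra u$ toward $u$. One checks this is acyclic (it respects the layering $V_0 < V_1 < \cdots < V_m < u$), and then that it is $2$-good: any cycle $C$ of ${\sf M}_m(G)-E_0$ must use at least two ``level-increasing'' arcs of one orientation and at least two ``level-decreasing'' arcs of the other, because a cycle that changes layers must come back, and the vertex $u$ and the layering force at least two forward and two backward arcs. (The key point is that $V_0$ being edgeless means no cycle can sit entirely in one layer, so every cycle genuinely oscillates between layers.) Hence ${\sf M}_m(G)-E_0$ is a cover graph and $c({\sf M}_m(G)) \le \|G\|$.

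For the triangle-free case I would instead keep a maximum bipartite subgraph $(A,B)$ of $G$ with $e_2(G)$ edges and delete only the $\|G\|-e_2(G)$ edges of $E_0$ outside it. Now I want a $2$-good orientation of the subgraph $H = {\sf M}_m(G) - (E_0 \setminus E(A,B))$. Orient the surviving bottom edges from $A$ to $B$; orient each $E_i$ as before from layer $i-1$ to layer $i$; orient $\la m,j\ra u$ toward $u$. Acyclicity again follows from a ``potential'' argument (assign $\la i,j\ra$ a value combining the layer index $i$ and the side, $A$ or $B$, of $\la 0,j\ra$, with $u$ largest). For $2$-goodness: a cycle either stays in the bottom layer, in which case it lies in the bipartite graph $(A,B)$ and is automatically $2$-good, or it changes layers and the same oscillation-plus-$u$ argument as above gives at least two forward and two backward arcs. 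Therefore $H$ is a cover graph and $c({\sf M}_m(G)) \le \|G\| - e_2(G)$.

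The main obstacle I anticipate is the $2$-goodness verification rather than acyclicity: I must rule out a cycle with fewer than two backward arcs (equivalently, a cycle with at most one ``wrong-way'' edge, which would be a dependent arc) in the layered orientation. The delicate configurations are cycles passing through $u$ and short cycles using edges of two consecutive $E_i$'s; the triangle-free hypothesis (when invoked) and the absence of bottom-layer chords in the general case are exactly what prevent a cycle from having only one backward arc. I would organize this as a short lemma: in the layer-respecting orientation above, every cycle of ${\sf M}_m(G) - F$ has at least two arcs in each direction — proved by following the cycle and noting each maximal ascending run must be matched by a descending run, with $u$ (a sink) and the endpoints of such runs supplying the needed forward and backward arcs.
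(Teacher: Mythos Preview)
Your approach is correct but proceeds differently from the paper's. The paper never constructs an explicit orientation of ${\sf M}_m(G)-F$; instead it invokes the criterion of Fisher et al.\ that $\chi(H)<g(H)$ forces $H$ to be a cover graph, packaged as the inequalities $c(G)\leqslant \|G\|-e_{k-1}(G)$ when $g(G)\geqslant k$ and its refinement~(\ref{girth>=4 pi_E}). For the first bound the paper simply notes that ${\sf M}_m(G)-E_0$ is bipartite, so $e_2({\sf M}_m(G))\geqslant \|{\sf M}_m(G)\|-\|G\|$; for the second it observes that ${\sf M}_m(G)-E_0$ is bipartite with all of $V_0$ on one side $X$, so $e_2({\sf M}_m(G)[X])\geqslant e_2(G)$, and then applies~(\ref{girth>=4 pi_E}). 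This is a two-line coloring argument once those inequalities are in hand.

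Your route---building the layered orientation and checking $2$-goodness directly---works and is more self-contained, since it does not rely on the $\chi<g$ theorem. For the first part, the cleanest justification of $2$-goodness is not the informal ``oscillation'' picture but the observation that your potential $p(\la i,j\ra)=i$, $p(u)=m+1$ increases by exactly~$1$ along every arc, so every cycle has flow difference~$0$; combined with bipartiteness of ${\sf M}_m(G)-E_0$ (hence girth~$\geqslant 4$), this gives at least two arcs in each direction. For the triangle-free refinement your case split is a little loose as written: the ``same oscillation argument'' does not transfer verbatim, because the bottom-layer $A\to B$ arcs do not change the layer index. The clean way to finish is to check directly that no arc is dependent: any directed path stays in $V_0$ for at most one step and thereafter strictly ascends layers, so a length-$\geqslant 2$ directed path from $x$ to $y$ paralleling an arc $x\to y$ would force a triangle $j,b,k$ in $G$ (when $x=\la 0,j\ra$, $y=\la 1,k\ra$, via $\la 0,b\ra$) or is impossible for all other arc types. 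That is exactly where the triangle-free hypothesis enters, as you suspected. With this in place your argument is complete; the paper's version is shorter but leans on an external result, while yours exposes the mechanism.
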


\begin{proof}
Obviously, ${\sf M}_m(G)-E(G)$ is bipartite. We have $e_2({\sf M}_m(G))$ 
$\geqslant$  $\|{\sf M}_m(G)\|-\|G\|$. By inequality (\ref{girth and e_k}), 
$c({\sf M}_m(G))\leq \|G\|$. If $G$ is triangle-free, so is ${\sf M}_m(G)$.
It is easy to see that ${\sf M}_m(G)-E(G)$ is a bipartite graph with 
bipartition $(X,Y)$ such that the vertices of $G$ belong to the same 
partite set, say $X$. Since $G$ has a  bipartite subgraph with $e_2(G)$ 
edges, $e_2({\sf M}_m(G)[X])\geqslant e_2(G)$. By inequality (\ref{girth>=4 
pi_E}), $c({\sf M}_m(G)) \leqslant \|{\sf M}_m(G)\|- \|{\sf M}_m(G)-E(G)\| 
-e_2({\sf M}_m(G)[X])\leqslant \|G\|-e_2(G)$.
\end{proof}

\bigskip

{\bf Acknowledgment}.\
The authors are indebted to Dr. Fei-Huang Chang for useful discussions
on Theorem \ref{mycielski d_min=1}.

%%%%%%%%%%%%%%%%%%%%%%%%%%%%%%%%%%%%%%%%%%%%%%%%%%%%%%%%%%%%%%%%%%%%%%%%%

\end{document}